\documentclass[12pt,a4paper]{amsart}
\usepackage[top=35mm, bottom=32mm, left=28mm, right=28mm]{geometry}
\usepackage{mathptmx}
\usepackage{mathrsfs}
\usepackage{amscd}
\usepackage{verbatim}
\usepackage{url}
\usepackage[all]{xy}
\usepackage{color}
\usepackage[colorlinks=true,citecolor=blue]{hyperref}
\usepackage{amsmath}
\usepackage{bm}

\theoremstyle{plain}

\newtheorem{thm}{Theorem}[section]
\newtheorem{lem}[thm]{Lemma}
\newtheorem{prop}[thm]{Proposition}
\newtheorem{cor}[thm]{Corollary}

\theoremstyle{definition}
\newtheorem{defn}[thm]{Definition}
\newtheorem{rem}[thm]{Remark}

\newcommand{\Z}{{\mathbb{Z}}}
\newcommand{\N}{\mathbb{N}}

\newcommand{\ep}{\varepsilon}
\newcommand{\ra}{\rightarrow}
\newcommand{\lra}{\longrightarrow}

\DeclareMathOperator{\diam}{diam}

\def \RP {{\bf RP}}
\def \AP {{\bf AP}}
\newcommand{\E}{{\mathbb{E}}}

\def \d {\delta}

\def \X {\mathcal{X}}

\begin{document}
\title[A refined saturation theorem for polynomials and applications]{A refined saturation theorem for polynomials and applications}
\author{Xiangdong Ye and Jiaqi Yu}
\address{School of Mathematical Sciences, University of Science and Technology of China, Hefei, Anhui, 230026, P.R. China}

\email{yexd@ustc.edu.cn}
\email{yjq2020@mail.ustc.edu.cn}

\keywords{saturation theorem, integral polynomials, regionally proximal relation}
\renewcommand{\thefootnote}{}
\footnotetext{2020 {\it Mathematics Subject Classification.} Primary: 37B05}
\thanks{This research is supported by NNSF of China 12031019.}

\begin{abstract} For a dynamical system $(X,T)$, $d\in\N$ and distinct non-constant integral polynomials $p_1,\ldots, p_d$ vanishing at $0$, the notion of regionally proximal relation along $C=\{p_1,\ldots,p_d\}$ (denoted by $\RP_C^{[d]}(X,T)$) is introduced.

It turns out that for a minimal system, $\RP_C^{[d]}(X,T)=\Delta$ implies that $X$ is an almost one-to-one extension of $X_k$ for some $k\in\N$ only depending on a set of finite polynomials associated with $C$ and has zero entropy, where $X_k$ is the maximal $k$-step pro-nilfactor of $X$.

Particularly, when $C$ is a collection of linear polynomials, it is proved that $\RP_C^{[d]}(X,T)=\Delta$ implies $(X,T)$ is a $d$-step pro-nilsystem, which answers negatively a conjecture in \cite{5p}.
The results are obtained by proving a refined saturation theorem for polynomials.
\end{abstract}

\maketitle

\section{Introduction}
In this section we give the motivation of our study and state the main results of the paper.

\subsection{The motivation}\
To study the convergence of the multiple ergodic averages, the notion of characteristic factors was introduced by Furstenberg and Weiss in \cite{F2}. By the results of Host-Kra \cite{HK05} and Ziegler \cite{Zie}, the characteristic factor of the multiple ergodic averages $\frac{1}{N}\sum_{n=0}^{N-1}f_{1}(T^nx)\cdots f_d(T^{dn}x)$ is some $(d-1)$-step pro-nilsystem.

\medskip
The motivation of this paper comes from the consideration of characteristic factors in topological dynamics. In the topological setting, the corresponding notion was first introduced by Glasner in \cite{G94}. To be precise, let $(X,T)$ be a dynamical system and $d\in\mathbb{N}$, set $\tau_d=T\times T^2\times \cdots \times T^d$. $(Y,T)$ is a {\em topological characteristic factor (TCF for short) of order $d$} if there exists a dense $G_{\delta}$ set $\Omega$ of $X$ such that for each $x\in\Omega$ the orbit closure $L_x^d=\overline{O}(x^{(d)},\tau_d)$ is $\pi^{(d)}=:\pi\times\cdots \times \pi$ ($d$ times) saturated, i.e., $(\pi^{(d)})^{-1}\pi^{(d)}(L_x^d)=L_x^d$, 
where $x^{(d)}=(x,\ldots,x)$ ($d$ times) and $\pi:X\to Y$ is the factor map.

\medskip

Glasner, Hunag, Shao, Weiss and Ye in \cite{5p} obtained the following theorem.
Here we state it in the equivalence form.

\medskip

\noindent{\bf Theorem GHSWY:}
{\it Let $(X,T)$ be a minimal system and $\pi:X\rightarrow X_\infty$ be the factor map, where $X_\infty$ is the maximal $\infty$-step
pro-nilfactor of $X$. Then there is a dense $G_\delta$ set $\Omega$ of $X$ such that for any $d\in\N$ and $x\in \Omega$,
$$(\pi^{(d)})^{-1}\pi^{(d)}(x^{(d)})\subset L_x^d=\overline{O}(x^{(d)},\tau_d).$$
Particularly, if $\pi$ is open then $X_\infty$ is the TCF of order $d$ for any $d\in\N$.
}

\medskip
Now let $p_1,\ldots, p_d$ be integral polynomials vanishing at $0$. It is easy to extend the notion of TCF of order $d$ to general polynomials
$p_1,\ldots, p_d$. A natural question is how to generalize the Theorem GHSWY to general polynomials.
In the recent nice work by Qiu \cite{Qiu}, the author proved a version of the saturation theorem for polynomials, which allows him to give a complete answer to a well known question concerning the density of polynomial orbits in a totally minimal systems.
The result of Qiu was strengthened in \cite{HSY-22-2} by Huang, Shao and Ye, and now we state the equivalence form.

\medskip
\noindent{\bf Theorem HSY:}
{\it Let $(X,T)$ be a minimal system, and $\pi:X\rightarrow X_\infty$ be the factor map. Let $d\in\N$ and $p_1, p_2,\ldots, p_d$ be distinct non-constant integral polynomials vanishing at $0$. Then there is a dense $G_\delta$ set $\Omega$ of $X$ such that for any 
$x\in \Omega$,
$$(\pi^{(d)})^{-1}\pi^{(d)}(x^{(d)})\subset L_x^{C}=:\overline{\{(T^{p_1(n)}x, \ldots, T^{p_d(n)}x): n\in\Z\}},$$
where $C=\{p_1,\ldots,p_d\}$.
Particularly, if $\pi$ is open then $X_\infty$ is the TCF
of order $d$ for C, 
i.e., $(\pi^{(d)})^{-1}\pi^{(d)}(L_x^{C})=L_x^{C}.$
}

\subsection{Main results}
One of the main results of the paper is a refinement of Theorem HSY. 

\medskip
\noindent{\bf Theorem A:}
{\it Let $(X,T)$ be a minimal system, $d\in \N$ and  $p_1, p_2, \ldots, p_d$ be distinct non-constant integral polynomials vanishing at $0$. Then there are $k\in \N$ (depending only on the polynomials) and a dense $G_\delta$ set $\Omega$ of $X$ such that for any $x\in \Omega$,
\begin{equation}\label{satu-fi}
(\pi_k^{(d)})^{-1}\pi_k^{(d)}(x^{(d)})\subset L_x^{C}=:\overline{\{(T^{p_1(n)}x, \ldots, T^{p_d(n)}x): n\in\Z\}},
\end{equation}
where $C=\{p_1,\ldots,p_d\}$, $X_k$ is the maximal $k$-step pro-nilfactor and $\pi_k:X\ra X_k$ is the factor map.
Particularly, if $\pi_k$ is open then $X_k$ is the TCF
of order $d$ for $C$. 
}

\medskip


We remark that when $p_1,\ldots,p_d$ are linear polynomials, we can choose $k=d-1$.

Moreover, when $d=1$, we can choose $k=1$ and $X_1$ can be replaced by $X_{rat}$, where $X_{rat}$ is a factor of $X_1$ associated with all
(topological) rational eigenvalues. This implies that if $(X,T)$ is totally minimal, then there is a dense $G_\delta$ set $\Omega$ of $X$ such that for each $x\in \Omega$, $\{T^{p_1(n)}x:n\in \Z\}$ is dense in $X$ (as in this case $X_{rat}$ is trivial), which gives another approach to the result in \cite{5p, Qiu} for a single polynomial.


\medskip

Theorem A is proved by using Theorem HSY, an ergodic argument to deal with the distal case, and a method to put them together.

\medskip
Theorem A has applications to the regionally proximal relation along polynomials we now introduce, 
which is a natural generalization of the regionally proximal relation along linear polynomials (denoted by $\AP^{[d]}(X,T)$) introduced in \cite{4p}.
Precisely, let $(X,T)$ be a topological system,
and $C=\{p_1,\ldots,p_d\}$ be a set of distinct non-constant integral polynomials vanishing at $0$. We say $(x,y)\in X\times X$ is {\it regionally proximal along
$C$} (denoted by $(x,y)\in \RP_C^{[d]}(X,T)$) if for a given $\ep>0$ and a neighborhood $U\times V$ of $(x,y)$, there are a pair $(x',y')\in U\times V$ and $n\in\Z$ such that
$\rho(T^{p_i(n)}x',T^{p_i(n)}y')<\ep$ for any $1\le i\le d$. In a certain sense, this definition was inspired by the study of the multiple
ergodic averages
$$\frac{1}{N}\sum_{n=0}^{N-1}f_1(T^{p_1(n)}x)\cdots f_d(T^{p_d(n)}x).$$

\medskip
Using Theorem A and the properties of $\RP^{[d]}_C$ we obtain the other main result of the paper.

\medskip
\noindent{\bf Theorem B:}
{\it Let $(X,T)$ be a minimal system, $d\in \N$ and  $C=\{p_1, p_2, \ldots, p_d\}$ be distinct non-constant integral polynomials vanishing at $0$. Then
$\RP_C^{[d]}(X,T)=\Delta$ implies that $X$ is an almost one to one extension of $X_k$ for some $k\in \N$ only depending on a set of finite  polynomials derived from $C$, and has zero entropy.
Particularly, if $(X,T)$ is distal then it is a $k$-step pro-nilsystem.}

\medskip
As a corollary we get that if $\AP^{[d]}(X,T)=\Delta$ then $(X,T)$ is a $d$-step pro-nilsystem, which gives a negative answer to a conjecture in \cite[Conjecture 4]{5p}.

\subsection{Organization of the paper}
We organize the paper as follows. In Section 1, we give the motivation of this paper and state the main results. In Section 2 we introduce some necessary notions and some known facts to be used in the paper. In Section 3, we prove the refined saturation theorem. In Section 4, we give some applications to regionally proximal relation along polynomials and give a negative answer to a conjecture in \cite{5p}. Moreover, we ask
several open questions at the end of the paper.

\medskip

\noindent {\bf Acknowledgement}: We thank Jiahao Qiu, Song Shao and Hui Xu for useful discussions.

\section{Preliminary}

In this section we give some necessary notions and some known facts used in the paper.

\subsection{Topological dynamical systems}

\subsubsection{}
By a {\em topological dynamical system} (for short t.d.s.) we mean a pair $(X,T)$, where
$X$ is a  compact metric space $X$ with a metric $\rho$
and $T:X\to X$ is a
homeomorphism. Let $(X, T)$ be a t.d.s. and $x\in X$. Then $O(x,T)=\{T^nx: n\in \Z\}$ denotes the
{\em orbit} of $x$. A subset $A\subseteq X$ is called {\em invariant} (or {$T$-invariant}) if $TA= A$. When $Y\subseteq X$ is a closed and
invariant subset of the system $(X, T)$, we say that the system
$(Y, T|_Y)$ is a {\em subsystem} of $(X, T)$. Usually we will omit the subscript, and denote $(Y, T|_Y)$ by $(Y,T)$.
If $(X, T)$ and $(Y, S)$ are two t.d.s., their {\em product system} is the
system $(X \times Y, T\times S)$.

\medskip

When there are more than one t.d.s. involved, usually we should use different symbols to denote different transformations on different spaces, for example, $(X,T), (Y,S), (Z,H)$ etc. But when no confusing, it is convenient to use only one symbol $T$ for all transformations in all t.d.s. involved,  for example, $(X,T), (Y,T), (Z,T)$ etc.
In this paper, we {\em  use the same symbol $T$ for the transformations in all t.d.s.}

\subsubsection{}
Let $X, Y$ be compact metric spaces and $\phi: X \to Y$ be a map.  For $n \geq 2$, let $\phi^{(n)}=\phi\times \cdots \times \phi: X^n\rightarrow Y^n.$
We write $(X^n,T^{(n)})$ for the $n$-fold product system $(X\times	\cdots \times X,T\times \cdots \times T)$.
The diagonal of $X^n$ is $\Delta_n(X)=\{(x,\ldots,x)\in X^n: x\in X\}.$
When $n=2$ we write	$\Delta(X)=\Delta_2(X)$.

\subsubsection{}
A t.d.s. $(X,T)$ is called {\em minimal} if $X$ contains no proper non-empty
closed invariant subsets. It is easy to verify that a t.d.s. is
minimal if and only if every orbit is dense. In a t.d.s. $(X, T)$ we say that a point $x\in X$ is {\em minimal} if $(\overline{O(x,T)}, T)$ is minimal.

\subsubsection{}
Let $(X,T)$ be a t.d.s. A pair $(x,y)\in X^2$ is {\it proximal} if $\inf_{n\in \Z} \rho(T^nx,T^ny)=0$; and it is {\it distal} if it is not proximal. Denote by ${\bf P}(X,T)$ the set of all proximal pairs of $(X,T)$.
A t.d.s. $(X,T)$ is called {\it distal} if $(x,x')$ is distal whenever $x,x'\in X$ are distinct.



\subsubsection{}
Let $(X,T)$ be a t.d.s. and $\mathcal{M}(X)$ be the set of all Borel
probability measures on $X$. A measure $\mu\in\mathcal{M}(X)$ is
{\em $T$-invariant} if for any Borel set $B\in\X$,
$\mu(T^{-1}B)=\mu(B)$ and we call $(X,\X,\mu,T)$ a measure-preserving system (m.p.s. for short). Denote by $\mathcal{M}(X,T)$ the set of
invariant elements of $\mathcal{M}(X)$. A measure $\mu\in\mathcal{M}(X,T)$
is {\em ergodic} if for any Borel set $B$ of $X$ satisfying
$\mu(T^{-1}B\triangle B)=0$ we have $\mu(B)=0$ or $\mu(B)=1$.
The system
$(X,T)$ is  {\em uniquely ergodic} if $\mathcal{M}(X,T)$ consists of
only one element, and it is strictly ergodic if in addition it is minimal.

Let $(X,T)$ be a t.d.s, $\mu\in\mathcal{M}(X)$. The {\em support} of $\mu$ is defined to be
$$ Supp(\mu)=\{x\in X: \text{for each  neighborhood}\ U \ \text{of} \ x,\ \mu(U)>0\}.$$
\subsection{Factor maps}\

\subsubsection{}
A {\em factor map} $\pi: X\rightarrow Y$ between two t.d.s. $(X,T)$
and $(Y, T)$ is a continuous onto map which intertwines the
actions (i.e. $\pi\circ T= T\circ \pi$); one says that $(Y, T)$ is a {\it factor} of $(X,T)$ and
that $(X,T)$ is an {\it extension} of $(Y,T)$.

\subsubsection{}
Let $\pi: (X,T)\rightarrow (Y, T)$ be a factor map. Then
$R_\pi=\{(x_1,x_2):\pi(x_1)=\pi(x_2)\}$
is a closed invariant equivalence relation, and $Y=X/ R_\pi$.
Let $(X,T)$ and $(Y,T)$ be t.d.s. and let $\pi: (X,T) \to (Y,T)$ be a factor map.
One says that:
\begin{itemize}
 \item $\pi$ is a {\it proximal} extension if
$\pi(x_1)=\pi(x_2)$ implies $(x_1,x_2) \in {\bf P} (X,T)$;
  \item $\pi$ is  a {\it distal} extension if $\pi(x_1)=\pi(x_2)$ and $x_1\neq x_2$ implies $(x_1,x_2) \not\in {\bf P} (X,T)$;
  \item $\pi$ is an {\it almost one to one} extension  if there exists a dense $G_\d$ set $X_0\subseteq X$ such that $\pi^{-1}(\{\pi(x)\})=\{x\}$ for any $x\in X_0$;
\end{itemize}

Note that an almost one to one extension is a proximal one.

\subsection{Nilsystems}\

\medskip

Let $G$ be a group. For $g, h\in G$ and $A,B \subseteq G$, we write $[g, h] =
ghg^{-1}h^{-1}$ for the commutator of $g$ and $h$ and
$[A,B]$ for the subgroup spanned by $\{[a, b] : a \in A, b\in B\}$.
The commutator subgroups $G_j$, $j\ge 1$, are defined inductively by
setting $G_1 = G$ and $G_{j+1} = [G_j ,G]$. Let $d \ge 1$ be an
integer. We say that $G$ is {\em $d$-step nilpotent} if $G_{d+1}$ is
the trivial subgroup.

\medskip
Let $d\in\N$, $G$ be a $d$-step nilpotent Lie group and $\Gamma$ be a discrete
cocompact subgroup of $G$. The compact manifold $X = G/\Gamma$ is
called a {\em $d$-step nilmanifold}. The group $G$ acts on $X$ by
left translations and we write this action as $(g, x)\mapsto gx$.
The Haar measure $\mu$ of $X$ is the unique Borel probability measure on
$X$ invariant under this action. Fix $t\in G$ and let $T$ be the
transformation $x\mapsto t x$ of $X$, i.e. $t(g\Gamma)=(tg)\Gamma$ for each $g\in G$. Then $(X, \mu, T)$ is
called a {\em $d$-step nilsystem}. In the topological setting we omit the measure
and just say that $(X,T)$ is a $d$-step nilsystem.
A {\it $d$-step pro-nilsystems} is an inverse limit of $d$-step nilsystems.

\subsection{Regionally proximal relation
of order $d$}\
\medskip

\begin{defn}\cite[Definition 3.2]{HKM} 
Let $(X, T)$ be a t.d.s. and let $d\in \N$. The points $x, y \in X$ are
said to be {\em regionally proximal of order $d$} if for any $\d  >
0$, there exist $x', y'\in X$ and a vector ${\bf n} = (n_1,\ldots ,
n_d)\in\Z^d$ such that $\rho (x, x') < \d, \rho (y, y') <\d$, and $$
\rho (T^{{\bf n}\cdot \ep}x', T^{{\bf n}\cdot \ep}y') < \d\
\text{for every  $\ep=(\ep_1,\ldots,\ep_d)\in \{0,1\}^d\setminus \{(0,0,\cdots,0)\}$},$$
where ${\bf n}\cdot \ep=\sum_{i=1}^d n_i\ep_i$.
The set of regionally proximal pairs of
order $d$ is denoted by $\RP^{[d]}$ (or by $\RP^{[d]}(X,T)$ in case of
ambiguity), and is called {\em the regionally proximal relation of
order $d$}.
\end{defn}

It is easy to see that $\RP^{[d]}$ is a closed and invariant
relation, and 
\begin{equation*}
    {\bf P}(X,T)\subseteq  \ldots \subseteq \RP^{[d+1]}\subseteq
    \RP^{[d]}\subseteq \ldots \RP^{[2]}\subseteq \RP^{[1]}.
\end{equation*}

The following theorems were proved in \cite{HKM} and \cite{SY}. 

\begin{thm}\label{thm-1}
Let $(X, T)$ be a minimal t.d.s. and let $d\in \N$. Then
\begin{enumerate}

\item $\RP^{[d]}$ is an equivalence relation.

\item $(X,T)$ is a $d$-step pro-nilsystem if and only if $\RP^{[d]}=\Delta_X$.
\end{enumerate}
\end{thm}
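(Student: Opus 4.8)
The plan is to reformulate everything via the \emph{dynamical cube spaces}. For $d\in\N$ let $\Q^{[d]}(X)$ be the closure in $X^{2^d}$ of the set of dynamical parallelepipeds $\{(T^{\mathbf n\cdot\ep}x)_{\ep\in\{0,1\}^d}:x\in X,\ \mathbf n\in\Z^d\}$, and recall the reformulation that $(x,y)\in\RP^{[d]}$ iff the point with $x$ at coordinate $(0,\ldots,0)$ and $y$ at each of the remaining $2^d-1$ coordinates lies in $\Q^{[d]}(X)$ (equivalently, iff some point of $\Q^{[d]}(X)$ has its first two coordinates equal to $x$ and $y$). With this description, reflexivity and symmetry of $\RP^{[d]}$ follow at once from the evident symmetries of the cube structure, so the content of (1) is transitivity. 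I would also dispatch the easy implication of (2) first: if $X=G/\Gamma$ is a $d$-step nilsystem, then $\Q^{[d]}(X)$ coincides with the orbit of the base point under the Host--Kra cube group $\G^{[d]}\le G^{2^d}$, and a direct computation --- writing the $(0,\ldots,0)$-coordinate in terms of commutators of length $\le d$ of the edge parameters and using $(d{+}1)$-step nilpotency --- shows that a parallelepiped whose other $2^d-1$ vertices all coincide must be constant; hence $\RP^{[d]}=\Delta$. For a $d$-step pro-nilsystem one passes to the inverse limit, using that $\RP^{[d]}$ is closed and that $\RP^{[d]}$ of an inverse limit is the inverse limit of the $\RP^{[d]}$'s.

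For transitivity (part (1)) and the converse of (2), the strategy is to reduce to the distal case and there exploit the algebraic richness of $\Q^{[d]}(X)$. When $(X,T)$ is minimal \emph{and distal}, $(\Q^{[d]}(X),\mathcal F^{[d]})$ is minimal under the face group and its enveloping semigroup is a group, which gives enough transitivity to \emph{concatenate} and \emph{reflect} parallelepipeds along a common face; gluing a witness for $(x,y)\in\RP^{[d]}$ to one for $(y,z)\in\RP^{[d]}$ produces a witness for $(x,z)\in\RP^{[d]}$, and the same manipulations identify $X/\RP^{[d]}$ as a genuine factor (in fact the maximal $d$-step pro-nilfactor). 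To remove distality I would use the structure theory of minimal systems together with two transfer lemmas: that $\RP^{[d]}$ always maps \emph{onto} $\RP^{[d]}$ under a factor map of minimal systems, and that through a proximal extension $\RP^{[d]}$ is recovered modulo the proximal relation; transitivity for a general minimal system then descends from its distal modification. (The base case $d=1$ --- $\RP^{[1]}$ the classical regionally proximal relation, $X/\RP^{[1]}$ the maximal equicontinuous factor --- is classical.)

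For the hard implication of (2), assume $\RP^{[d]}=\Delta$. Then $\mathbf P(X,T)\subseteq\RP^{[d]}=\Delta$, so $(X,T)$ is automatically distal and we are in the setting above. Working in $\Q^{[d+1]}(X)$, the hypothesis says exactly that in every $(d{+}1)$-dimensional parallelepiped the vertex indexed by $(1,\ldots,1)$ is determined by the other $2^{d+1}-1$ vertices --- the topological analogue of Host--Kra's ``order $\le d$'' condition. Using the tower $\RP^{[1]}\supseteq\RP^{[2]}\supseteq\cdots\supseteq\RP^{[d]}=\Delta$ one presents $X$ as an inverse limit of finite-dimensional distal minimal systems in which each step $X/\RP^{[j+1]}\to X/\RP^{[j]}$ is an abelian isometric group extension; a finite-dimensional distal minimal system with the order-$\le d$ property is a $d$-step nilmanifold by the structure theorem for nilsystems (Host--Kra, Leibman). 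The inverse limit is a $d$-step pro-nilsystem, and minimality forces $X$ to equal it.

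I expect the main obstacles to be twofold. First, the reduction of part (1) from general minimal systems to distal ones: this is absent from the distal treatment of \cite{HKM} and requires the full strength of the structure theory for minimal flows together with delicate control of how $\RP^{[d]}$ behaves under proximal and distal extensions --- this is precisely the achievement of \cite{SY}. Second, within the hard direction of (2), the step that upgrades each finite-dimensional distal factor from an iterated isometric extension to a genuine nilmanifold: here Lie-theoretic input is unavoidable and soft dynamical arguments do not suffice.
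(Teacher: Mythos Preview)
The paper does not prove this theorem at all: it is quoted as a known result with the line ``The following theorems were proved in \cite{HKM} and \cite{SY}'' and no argument is supplied. So there is no proof in the paper to compare your proposal against.

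Your sketch is a reasonable outline of how those references actually proceed --- the cube-space reformulation via $\Q^{[d]}(X)$, the distal case handled by Host--Kra--Maass, and the removal of the distality hypothesis via the structure theory of minimal flows as carried out by Shao--Ye --- and you correctly flag the two genuinely hard ingredients (transitivity in the non-distal case, and the Lie-theoretic step identifying the factors as nilmanifolds). For the purposes of this paper, however, the appropriate ``proof'' is simply a citation, and that is what the authors do.
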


The regionally proximal relation of order $d$ allows us to construct the maximal $d$-step
pro-nilfactor of a system.

\begin{thm}\label{thm0}
Let $\pi: (X,T)\rightarrow (Y,T)$ be a factor map between minimal t.d.s.
and let $d\in \N$. Then,
\begin{enumerate}
  \item $\pi\times \pi (\RP^{[d]}(X,T))=\RP^{[d]}(Y,T)$.
  \item $(Y,T)$ is a $d$-step pro-nilsystem if and only if $\RP^{[d]}(X,T)\subseteq R_\pi$.
\end{enumerate}
In particular, $X_d=X/\RP^{[d]}(X,T)$, the quotient of $(X,T)$ under $\RP^{[d]}(X,T)$, is the
maximal $d$-step pro-nilfactor of $X$. 
\end{thm}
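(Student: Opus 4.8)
The plan is to deduce the entire statement from part (1): once part (1) is in hand, part (2) and the final ``in particular'' assertion follow formally together with Theorem~\ref{thm-1}, so I would concentrate the work on part (1). The inclusion $\pi\times\pi\big(\RP^{[d]}(X,T)\big)\subseteq\RP^{[d]}(Y,T)$ is the easy half and I would read it straight off the definition: given $(x,x')\in\RP^{[d]}(X,T)$ and $\d>0$, choose $\d'>0$ with $\rho(a,b)<\d'\Rightarrow\rho_Y(\pi(a),\pi(b))<\d$ by uniform continuity of $\pi$, take $x'',x''',\mathbf n$ witnessing that $(x,x')$ is regionally proximal of order $d$ at scale $\d'$, and push this witness through $\pi$ to obtain a witness for $(\pi(x),\pi(x'))$ at scale $\d$. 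The substance of part (1) is the reverse inclusion, namely that every regionally proximal pair of order $d$ in $Y$ lifts to one in $X$.

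For the reverse inclusion I would pass to the dynamical cube spaces. Write $\mathbf Q^{[d+1]}(X)$ for the closure in $X^{2^{d+1}}$ of the orbit of the diagonal under the face group $\F^{[d+1]}$, and recall the description of the regionally proximal relation valid for minimal systems (the key input from \cite{HKM,SY}): $(x,x')\in\RP^{[d]}(X,T)$ if and only if a suitable ``corner'' configuration built from $x$ and $x'$ lies in $\mathbf Q^{[d+1]}(X)$. Since $\pi^{(2^{d+1})}$ intertwines the face actions and carries the diagonal of $X$ onto that of $Y$, compactness gives $\pi^{(2^{d+1})}\big(\mathbf Q^{[d+1]}(X)\big)=\mathbf Q^{[d+1]}(Y)$; hence, starting from $(y,y')\in\RP^{[d]}(Y,T)$, the corresponding corner configuration in $\mathbf Q^{[d+1]}(Y)$ lifts to some $\mathbf w\in\mathbf Q^{[d+1]}(X)$. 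The remaining — and decisive — step is to replace $\mathbf w$ by an honest corner configuration, i.e.\ to collapse its ``repeated'' coordinates to a single point over $y'$; I would do this using the minimality of $X$ and the standard machinery of cube spaces and their enveloping semigroups (in the distal case this is the argument of \cite{HKM}, while for an arbitrary minimal system one follows the finer analysis of \cite{SY}). This produces $(x,x')\in\RP^{[d]}(X,T)$ with $\pi(x)=y$, $\pi(x')=y'$, completing part (1).

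It remains to harvest the consequences. As $X$ is minimal and $\pi$ onto, $Y$ is minimal, so Theorem~\ref{thm-1}(2) applies to $Y$: $(Y,T)$ is a $d$-step pro-nilsystem iff $\RP^{[d]}(Y,T)=\Delta_Y$, which by part (1) is equivalent to $\pi\times\pi(\RP^{[d]}(X,T))=\Delta_Y$, i.e.\ to $\RP^{[d]}(X,T)\subseteq R_\pi$; this is part (2). For the final assertion, $\RP^{[d]}(X,T)$ is a closed invariant equivalence relation (closed and invariant as already observed, an equivalence relation by Theorem~\ref{thm-1}(1)), so $X_d:=X/\RP^{[d]}(X,T)$ is a minimal t.d.s.\ with factor map $\pi_d$ and $R_{\pi_d}=\RP^{[d]}(X,T)$. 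Applying part (1) to $\pi_d$ yields $\RP^{[d]}(X_d,T)=\pi_d\times\pi_d(R_{\pi_d})=\Delta_{X_d}$, so $X_d$ is a $d$-step pro-nilsystem by Theorem~\ref{thm-1}(2); and if $\sigma:(X,T)\to(Z,T)$ is any factor map onto a $d$-step pro-nilsystem, then $\RP^{[d]}(Z,T)=\Delta_Z$, so part (2) forces $\RP^{[d]}(X,T)\subseteq R_\sigma$, hence $R_{\pi_d}\subseteq R_\sigma$, so $\sigma$ factors through $\pi_d$ and $Z$ is a factor of $X_d$. Therefore $X_d$ is the maximal $d$-step pro-nilfactor of $X$.

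I expect the main obstacle to be the last step of part (1): a naive lift of the witnessing cube configuration from $Y$ need not have the corner shape needed to extract a regionally proximal pair of $X$, and forcing that shape when $X$ is merely minimal — rather than distal — is precisely the technical heart of the theorem, following \cite{SY}.
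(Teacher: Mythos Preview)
The paper does not supply a proof of this theorem: it is stated in the Preliminary section as a known result, attributed to \cite{HKM} and \cite{SY} (see the sentence ``The following theorems were proved in \cite{HKM} and \cite{SY}'' immediately preceding Theorems~\ref{thm-1} and~\ref{thm0}). So there is no ``paper's own proof'' to compare against.

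That said, your outline is a faithful sketch of how the result is actually established in those references. The reduction of (2) and the ``in particular'' clause to (1) via Theorem~\ref{thm-1} is exactly right, as is the easy inclusion in (1). For the hard inclusion you correctly identify the mechanism --- the cube spaces $\mathbf Q^{[d+1]}$, their behaviour under factor maps, and the characterisation of $\RP^{[d]}$ in terms of corner configurations --- and you correctly flag that the real work is turning an arbitrary lift in $\mathbf Q^{[d+1]}(X)$ into a corner configuration, which in the general minimal (non-distal) case is precisely the content of \cite{SY}. Your proposal is thus a correct high-level plan, but note that the ``decisive step'' you defer to \cite{SY} is not a routine application of enveloping-semigroup machinery; it is the substance of that paper, so if you were asked to write a self-contained proof you would need to reproduce that argument rather than cite it.
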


\subsection{$\infty$-step pro-nilsystems}\
\medskip

By Theorem \ref{thm-1} for any minimal t.d.s. $(X,T)$,
$\RP^{[\infty]}=\bigcap_{d\ge 1}\RP^{[d]}$
is a closed invariant equivalence relation (we write $\RP^{[\infty]}(X,T)$ in case of ambiguity). Now we formulate the
definition of $\infty$-step pro-nilsystems. 
A minimal t.d.s. $(X, T)$ is an {\em $\infty$-step pro-nilsystem} (see \cite[Definition 3.4]{D-Y}), if the equivalence
relation $\RP^{[\infty]}$ is trivial, i.e., coincides with the diagonal.

\begin{rem}
\begin{enumerate}
  \item Similar to Theorem \ref{thm0}, one can show that the quotient of a
minimal system $(X,T)$ under $\RP^{[\infty]}$ is the maximal
$\infty$-step pro-nilfactor of $(X,T)$. We denote the maximal
$\infty$-step pro-nilfactor of $(X,T)$ by $X_\infty$.

  \item A minimal system is an $\infty$-step pro-nilsystem if and only if it is
an inverse limit of minimal nilsystems \cite[Theorem 3.6]{D-Y}.

\item An $\infty$-step pro-nilsystem is distal.

\end{enumerate}
\end{rem}

\subsection{Dense sets}\
\medskip

Let $\pi:X\ra Y$ be a map between topological spaces. We say $\pi$ is {\it semi-open} if $U$ is open and non-empty in $X$,
then $\pi(U)$ has non-empty interiors; and $\pi$ is {\it open} if it sends open sets to open sets.
 It is known that if $\pi$ is a factor map between minimal systems, then $\pi$ is semi-open. Moreover, the factor map between distal systems is open.

\begin{lem}\label{veech-1} \cite[Proposition3.1]{Veech} Let $\pi:X\lra Y$ be a factor between minimal t.d.s. If $\Omega$ is a dense $G_\delta$ set of $X$,
then there is a dense $G_\delta$ set $Y_0$ of $Y$ such that for each $y\in Y_0$, $\Omega\cap \pi^{-1}(y)$
is a dense  $G_\delta$ set of $\pi^{-1}(y).$
\end{lem}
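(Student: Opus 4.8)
The plan is to peel off the $G_\delta$ structure using the Baire category theorem, reducing everything to a statement about a single dense open set, and then to handle that case using the fact that $\pi$ is semi-open (which holds since $\pi$ is a factor map between minimal systems). First I would write $\Omega=\bigcap_{n\ge 1}U_n$ with each $U_n$ open and dense in $X$. Each fiber $\pi^{-1}(y)$ is a closed, hence compact metric, subset of $X$, so it is a Baire space; therefore $\Omega\cap\pi^{-1}(y)=\bigcap_n(U_n\cap\pi^{-1}(y))$ is automatically a dense $G_\delta$ of $\pi^{-1}(y)$ as soon as each $U_n\cap\pi^{-1}(y)$ is open and dense in $\pi^{-1}(y)$. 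Thus it suffices to prove, for a single dense open set $U\subseteq X$, that there is a dense $G_\delta$ set $Y_U\subseteq Y$ with $U\cap\pi^{-1}(y)$ dense in $\pi^{-1}(y)$ for every $y\in Y_U$; granting this, $Y_0:=\bigcap_n Y_{U_n}$ is a dense $G_\delta$ of $Y$ by Baire once more, and it works.

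Now fix a dense open $U$ and a countable basis $\{V_m:m\in\N\}$ of nonempty open subsets of $X$. Observe that $U\cap\pi^{-1}(y)$ is dense in $\pi^{-1}(y)$ precisely when, for every $m$, $V_m\cap\pi^{-1}(y)\neq\emptyset$ implies $U\cap V_m\cap\pi^{-1}(y)\neq\emptyset$; rewriting the two conditions as $y\in\pi(V_m)$ and $y\in\pi(U\cap V_m)$ turns this into the implication $y\in\pi(V_m)\Rightarrow y\in\pi(U\cap V_m)$. For each $m$ set $W_m:=\inte\big(\pi(U\cap V_m)\big)$. Since $U$ is dense, $U\cap V_m$ is a nonempty open set, and since $\pi$ is semi-open, $W_m$ is a nonempty open subset of $Y$. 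Put $\widetilde W_m:=W_m\cup\big(Y\setminus\overline{\pi(V_m)}\big)$, which is open, and define $Y_U:=\bigcap_m\widetilde W_m$.

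The heart of the argument, and the step I expect to be the main obstacle, is showing that each $\widetilde W_m$ is dense in $Y$; this is exactly where semi-openness does the work. Suppose some nonempty open $O\subseteq Y$ misses $\widetilde W_m$, so that $O\subseteq\overline{\pi(V_m)}$ and $O\cap W_m=\emptyset$. As $O$ is open and meets $\overline{\pi(V_m)}$, it meets $\pi(V_m)$, hence $\pi^{-1}(O)\cap V_m$ is a nonempty open set, and so is $U\cap\pi^{-1}(O)\cap V_m$ because $U$ is dense. Semi-openness of $\pi$ then forces $\pi\big(U\cap\pi^{-1}(O)\cap V_m\big)$ to have nonempty interior, and since this image is contained in $O\cap\pi(U\cap V_m)$ we get $\inte\big(O\cap\pi(U\cap V_m)\big)=O\cap W_m\neq\emptyset$ (using the elementary identity $\inte(O\cap S)=O\cap\inte(S)$ for $O$ open), a contradiction. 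Hence each $\widetilde W_m$ is open and dense, so $Y_U$ is a dense $G_\delta$ of $Y$. Finally, for $y\in Y_U$ and any $m$, either $y\in W_m\subseteq\pi(U\cap V_m)$ or $y\notin\overline{\pi(V_m)}\supseteq\pi(V_m)$; in both cases $y\in\pi(V_m)\Rightarrow y\in\pi(U\cap V_m)$, so $U\cap\pi^{-1}(y)$ is dense in $\pi^{-1}(y)$, which completes the reduction and hence the proof.
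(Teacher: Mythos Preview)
Your proof is correct. The paper does not supply its own argument for this lemma; it simply quotes the result from Veech \cite[Proposition~3.1]{Veech}, so there is nothing to compare against line by line. Your route---reducing via Baire to a single dense open set, then using semi-openness of $\pi$ together with a countable base to produce the fiberwise density set $Y_U$---is exactly the standard proof and matches the spirit of Veech's original argument. The one place a reader might pause is the sentence ``As $O$ is open and meets $\overline{\pi(V_m)}$, it meets $\pi(V_m)$''; this is true (any open set meeting the closure of a set meets the set itself), but it might be worth a word of justification since $\pi(V_m)$ need not be open.
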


\section{The proof of Theorem A}
In this section we will show Theorem A. As we said before, Theorem A is proved by using Theorem HSY, an ergodic argument to deal with the distal case, and a method to put them together. We start with some preparation.

\subsection{Preparation}
Note that we say polynomials $p$ and $q$ are {\it distinct} if $p-q$ is not constant (it is called {\it essentially distinct} in \cite{Lei05-1}). Let $C=\{p_1,\ldots, p_d\}$ be a set of distinct non-constant integral polynomials vanishing at $0$. 
Given a factor map $\pi: (X,T)\rightarrow (Y,T)$ and $d\ge 2$,
the t.d.s. $(Y,T)$ is said to be a {\em  TCF of order d for $C$ 
of $(X,T)$}, if there exists a dense $G_\d$
subset $\Omega$ of $X$ such that for each $x\in \Omega$ the orbit
closure $L_x^{C}=\overline{\{(T^{p_1(n)}x, \ldots, T^{p_d(n)}x): n\in\Z\}}$ is $\pi^{(d)}=\pi\times \cdots \times
\pi$ ($d$-times) saturated. The following was proved by in \cite{HSY-22-2}.

\begin{thm}\label{polsaturation}
Let $(X,T)$ be a minimal t.d.s., and $\pi:X\rightarrow X_\infty$ be the factor map from $X$ to its maximal $\infty$-step pro-nilfactor $X_\infty$
of $X$. Then there are minimal t.d.s. $X^*$ and $X_\infty^*$ which are almost one to one
extensions of $X$ and $X_\infty$ respectively, an open factor map $\pi^*$ and a commuting diagram below
\[
\begin{CD}
X @<{\varsigma^*}<< X^*\\
@VV{\pi}V      @VV{\pi^*}V\\
X_\infty @<{\varsigma}<< X_\infty^*
\end{CD}
\]
such that there is a $T$-invariant dense $G_{\delta}$ subset $X^*_0$ of $X^*$ having the following property:  for all $x\in X^*_0$, for any non-empty open subsets $V_1,\ldots, V_d$ of $X^*$ with $\pi(x)\in \bigcap _{i=1}^d \pi^*(V_i)$ and distinct non-constant integral polynomials $p_1,p_2,\ldots, p_d$ with $p_{i}(0)=0$, $i=1,2,\ldots, d$, there is some $n\in \N$ such that
$$x\in  T^{-p_1(n)}V_1\cap T^{-p_2(n)}V_2\cap \cdots \cap T^{-p_d(n)}V_d.$$
\end{thm}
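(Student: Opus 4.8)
The plan is to obtain the statement from Theorem HSY by passing to the canonical open modification of the factor map $\pi$ and then running a Baire category argument; all of the genuine depth is imported through Theorem HSY. First I would apply the standard construction of the open, almost one-to-one modification of a factor map between minimal systems (realised inside $X\times 2^{X}$ and $X_\infty\times 2^{X}$, where $2^{X}$ denotes the hyperspace of closed subsets; cf.\ \cite{5p}) to $\pi\colon X\to X_\infty$, obtaining minimal systems $X^*,X_\infty^*$ that are almost one-to-one extensions of $X,X_\infty$ via $\varsigma^*,\varsigma$, together with an open factor map $\pi^*\colon X^*\to X_\infty^*$ satisfying $\varsigma\circ\pi^*=\pi\circ\varsigma^*=:q$. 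The point that makes the reduction work is that $X_\infty$ is still the maximal $\infty$-step pro-nilfactor of $X^*$, with factor map $q$: since $\varsigma^*$ is almost one-to-one, hence proximal, $R_{\varsigma^*}\subseteq{\bf P}(X^*)\subseteq\RP^{[d]}(X^*)$ for every $d$, whence $R_{\varsigma^*}\subseteq\RP^{[\infty]}(X^*)$; by Theorem \ref{thm0}(1) applied to $\varsigma^*$ one has $(\varsigma^*\times\varsigma^*)(\RP^{[d]}(X^*))=\RP^{[d]}(X)$ for each $d$, and since the $\RP^{[d]}(X^*)$ form a decreasing sequence of closed sets this passes to the intersection, $(\varsigma^*\times\varsigma^*)(\RP^{[\infty]}(X^*))=\RP^{[\infty]}(X)$. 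Consequently the quotient map $X^*\to X^*/\RP^{[\infty]}(X^*)$ factors through $\varsigma^*$ and $X^*/\RP^{[\infty]}(X^*)=X/\RP^{[\infty]}(X)=X_\infty$, the quotient map being $q$.

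Next I would construct the generic set. The collection of data $(d;p_1,\dots,p_d)$ with $p_1,\dots,p_d$ distinct non-constant integral polynomials vanishing at $0$ is countable; for each such datum I apply the first assertion of Theorem HSY to the minimal system $(X^*,T)$ together with its maximal $\infty$-step pro-nilfactor map $q$, obtaining a dense $G_\delta$ set $\Omega_{d;p_1,\dots,p_d}\subseteq X^*$ with $(q^{(d)})^{-1}q^{(d)}(x^{(d)})\subseteq L_x^{C}$ (where $C=\{p_1,\dots,p_d\}$) for every $x$ in it. Put
\[
X_0^*=\bigcap_{m\in\Z}T^m\Bigl(\ \bigcap_{(d;p_1,\dots,p_d)}\Omega_{d;p_1,\dots,p_d}\ \Bigr),
\]
a $T$-invariant dense $G_\delta$ subset of $X^*$ by the Baire category theorem (each $T^m$ is a homeomorphism and the index set is countable). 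To verify the required property, fix $x\in X_0^*$, polynomials $p_1,\dots,p_d$ as in the statement, and non-empty open $V_1,\dots,V_d\subseteq X^*$ with $\pi^*(x)\in\bigcap_{i=1}^d\pi^*(V_i)$. For each $i$ choose $y_i\in V_i$ with $\pi^*(y_i)=\pi^*(x)$; then $q(y_i)=\varsigma(\pi^*(y_i))=\varsigma(\pi^*(x))=q(x)$, so $(y_1,\dots,y_d)\in(q^{(d)})^{-1}q^{(d)}(x^{(d)})\subseteq L_x^{C}$. Since $(y_1,\dots,y_d)$ lies both in the non-empty open box $V_1\times\dots\times V_d$ and in the closure of the polynomial orbit of $x^{(d)}$, that orbit meets the box, i.e.\ $x\in\bigcap_{i=1}^d T^{-p_i(n)}V_i$ for some $n$.

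That $n$ can be taken in $\N$ rather than in $\Z$ is a minor technical point, handled by the fact that the recurrence underlying Theorem HSY is polynomial Bergelson--Leibman recurrence, which occurs along $\N$, so the proof of Theorem HSY already supplies the version of the saturation with the polynomial orbit indexed by $n\in\N$; I would record and use this variant. I expect the only steps demanding genuine care --- and hence the main obstacle --- to be the construction and basic properties of the open modification (openness of $\pi^*$, almost one-to-oneness of $\varsigma$ and $\varsigma^*$, commutativity of the square) and the identification $X^*/\RP^{[\infty]}(X^*)=X_\infty$ from the first paragraph, since it is precisely this identification that licenses applying Theorem HSY to $(X^*,q)$ and that upgrades the admissibility hypothesis $\pi^*(x)\in\bigcap_i\pi^*(V_i)$ to the equalities $q(y_i)=q(x)$ used in the verification.
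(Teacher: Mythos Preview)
Your proposal is correct and aligns with the paper's own treatment: the paper does not prove Theorem~\ref{polsaturation} directly (it is quoted from \cite{HSY-22-2}), but in the Lemma immediately following it records that Theorem~HSY implies Theorem~\ref{polsaturation} once one knows that the maximal $\infty$-step pro-nilfactor of $X^*$ is again $X_\infty$ (citing \cite{5p}), and your argument is exactly this implication written out in detail---including the standard open almost one-to-one modification diagram and a self-contained verification of the pro-nilfactor identification via $(\varsigma^*\times\varsigma^*)(\RP^{[\infty]}(X^*))=\RP^{[\infty]}(X)$. The only part not fully covered by the black-box form of Theorem~HSY is the upgrade from $n\in\Z$ to $n\in\N$, which, as you note, has to be extracted from the proof of HSY rather than its statement.
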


Now we give the following lemma to explain the equivalence between Theorem \ref{polsaturation} and Theorem HSY.
\begin{lem}Theorem \ref{polsaturation} and Theorem HSY are equivalent.
\end{lem}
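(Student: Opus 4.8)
The plan is to show that Theorem HSY and Theorem~\ref{polsaturation} imply each other, treating the two implications separately.

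\medskip

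\emph{Theorem~\ref{polsaturation} $\Rightarrow$ Theorem HSY.}
First I would fix a minimal $(X,T)$, its maximal $\infty$-step pro-nilfactor $\pi:X\to X_\infty$, polynomials $C=\{p_1,\dots,p_d\}$, and pass to the diagram furnished by Theorem~\ref{polsaturation}, with $X^*, X_\infty^*, \varsigma^*, \varsigma, \pi^*$ and the dense $G_\delta$ set $X_0^*\subseteq X^*$. The goal is to produce a dense $G_\delta$ set $\Omega\subseteq X$ with $(\pi^{(d)})^{-1}\pi^{(d)}(x^{(d)})\subseteq L_x^C$ for $x\in\Omega$. I would take $\Omega=\varsigma^*(X_0^*)$ (or, to be safe, a dense $G_\delta$ set of points of $X$ all of whose $\varsigma^*$-preimages meet $X_0^*$, using that $\varsigma^*$ is an almost one-to-one factor map between minimal systems; Lemma~\ref{veech-1} is the relevant tool here). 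For $x\in\Omega$ pick $x^*\in X_0^*$ with $\varsigma^*(x^*)=x$, let $y^*\in X^*$ with $\pi^*(x^*)=\pi^*(y^*)$ so that $\varsigma(\pi^*(x^*))=\pi(x)$, and let $y=\varsigma^*(y^*)$; the point $(\varsigma^*)^{(d)}$ maps orbit closures under $\tau_d$ (and under the polynomial actions) onto orbit closures, so it suffices to show $(y^*,\dots,y^*)\in L_{x^*}^C$ inside $(X^*)^d$. Given $\varepsilon>0$, choose open $V_i\ni y^*$ of diameter $<\varepsilon$; since $\pi^*(y^*)=\pi^*(x^*)$ we get $\pi^*(x^*)\in\bigcap_i\pi^*(V_i)$, and applying Theorem~\ref{polsaturation} to $x^*\in X_0^*$ yields $n$ with $T^{p_i(n)}x^*\in V_i$ for all $i$, i.e. $(T^{p_1(n)}x^*,\dots,T^{p_d(n)}x^*)$ is within $\varepsilon$ of $(y^*,\dots,y^*)$. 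Since any point of $(\pi^{(d)})^{-1}\pi^{(d)}(x^{(d)})$ is of the form $(y_1,\dots,y_d)$ with all $\pi(y_j)=\pi(x)$, and on $X^*$ the fiber $(\pi^*)^{-1}(\pi^*(x^*))$ maps onto $\pi^{-1}(\pi(x))$, an approximation/compactness argument across the $d$ coordinates (choosing preimages $y_j^*$ and separate $V_i$'s around them) gives the full saturation statement.

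\medskip

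\emph{Theorem HSY $\Rightarrow$ Theorem~\ref{polsaturation}.}
This direction I expect to be more delicate, since Theorem~\ref{polsaturation} asserts existence of the whole commuting diagram with an \emph{open} map $\pi^*$, plus a uniform ``hitting'' conclusion valid \emph{simultaneously} for all choices of $V_1,\dots,V_d$ and all admissible polynomial tuples. I would invoke the standard topological-dynamics construction (as in \cite{5p, HSY-22-2}) that produces, for the factor map $\pi:X\to X_\infty$ between minimal systems, almost one-to-one extensions $\varsigma^*:X^*\to X$ and $\varsigma:X_\infty^*\to X_\infty$ together with an open factor map $\pi^*:X^*\to X_\infty^*$ fitting the diagram — this is where openness is manufactured, independently of the polynomials. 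Then I would transport the conclusion of Theorem HSY up to $X^*$: applying Theorem HSY on $X^*$ (whose maximal $\infty$-step pro-nilfactor is an almost one-to-one extension of $X_\infty$, hence identified with $X_\infty^*$ after the construction) gives, for each fixed tuple $C$, a dense $G_\delta$ set $\Omega_C\subseteq X^*$ with the saturation property. The passage from ``$(\pi^*)^{(d)}$-saturated orbit closure'' to ``$x^*\in T^{-p_1(n)}V_1\cap\cdots\cap T^{-p_d(n)}V_d$ whenever $\pi^*(x^*)\in\bigcap_i\pi^*(V_i)$'' is the unwinding of what saturation means together with openness of $\pi^*$ (openness ensures $\bigcap_i\pi^*(V_i)$ is a neighborhood of $\pi^*(x^*)$, so the relevant fiber point lies in the closure and is approximated). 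Finally, to get a \emph{single} dense $G_\delta$ set $X_0^*$ working for all polynomial tuples at once, I would observe that the collection of relevant tuples of distinct non-constant integral polynomials vanishing at $0$, up to the finitely many that matter at each scale, is countable, and intersect the corresponding countably many dense $G_\delta$ sets $\Omega_C$; a routine diagonalization over a countable basis of open sets for the $V_i$ makes the hitting conclusion uniform.

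\medskip

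The main obstacle, as indicated, is the second implication: reconstructing the full open diagram and then making the ``for all $V_i$, all $p_i$, there exists $n$'' statement uniform on a single residual set, rather than one residual set per instance. The cleanest route is to lean on the existing machinery from \cite{5p} and \cite{HSY-22-2} for building the open commuting square, and to use countability (of polynomial tuples and of a basis) plus Baire category to combine the per-instance statements; care is needed to check that the maximal $\infty$-step pro-nilfactor of the auxiliary system $X^*$ is indeed $X_\infty^*$, i.e. that the almost one-to-one modifications do not change the pro-nilfactor, which follows since almost one-to-one (hence proximal) extensions do not enlarge $\RP^{[\infty]}$.
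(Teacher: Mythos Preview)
Your overall strategy matches the paper's, but there is a real gap in the direction Theorem~\ref{polsaturation} $\Rightarrow$ Theorem HSY. You assert that ``on $X^*$ the fiber $(\pi^*)^{-1}(\pi^*(x^*))$ maps onto $\pi^{-1}(\pi(x))$'', and this is what your lifting argument hinges on; but it need not hold for an arbitrary $x\in\varsigma^*(X_0^*)$. If $\varsigma^{-1}(\pi(x))$ contains a point other than $\pi^*(x^*)$, then the $\pi^*$-fiber over $\pi^*(x^*)$ can miss part of $\pi^{-1}(\pi(x))$, and there is no reason the missing points should lie in $L_x^C$. The paper repairs this by further intersecting with $\pi^{-1}(\Omega_1)$, where $\Omega_1=\{y\in X_\infty:|\varsigma^{-1}(y)|=1\}$ is the dense $G_\delta$ set of injectivity points for $\varsigma$. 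For $x$ in this smaller set one has $\varsigma^{-1}(\pi(x))=\{\pi^*(x^*)\}$, and the commuting relation $\varsigma\pi^*=\pi\varsigma^*$ then gives
\[
(\varsigma^*)^{-1}\pi^{-1}(\pi(x))=(\pi^*)^{-1}\varsigma^{-1}(\pi(x))=(\pi^*)^{-1}(\pi^*(x^*)),
\]
which is exactly the surjection you claimed. (The paper also intersects with $\Omega_2=\{y\in X:|(\varsigma^*)^{-1}(y)|=1\}$ to make the lift $x\mapsto x^*$ unambiguous, though as you indicate this is less essential.)

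A smaller correction on the reverse direction: the maximal $\infty$-step pro-nilfactor of $X^*$ is $X_\infty$ itself, not $X_\infty^*$; proximal extensions do not change $\RP^{[\infty]}$, and $X_\infty^*$, being merely an almost one-to-one extension of a pro-nilsystem, is generally not one. This actually makes that implication easier than you suggest: HSY applied to $X^*$ gives saturation over $X_\infty$, and since $\pi^*$-fibers are contained in $(\varsigma\circ\pi^*)$-fibers, the hitting condition of Theorem~\ref{polsaturation} follows at once. The paper accordingly treats HSY $\Rightarrow$ Theorem~\ref{polsaturation} as the trivial direction, in one line.
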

\begin{proof} It is clear that Theorem HSY implies Theorem \ref{polsaturation}, as the maximal $\infty$-step pro-nilfactor of $X^*$ is also
$X_\infty$, see for example \cite{5p}.

To show the other implication, let us assume  
the commuting diagram in Theorem \ref{polsaturation}. As $X^*$ and $X_\infty^*$
are almost one to one extensions of $X$ and $X_\infty$ respectively,
$$\Omega_1=\{y\in X_\infty: |\varsigma^{-1}(y)=1|\}\  \text{and}\  \Omega_2=\{y\in X: |\varsigma^{*-1}(y)=1|\}$$
are dense $G_\delta$ sets. Let $X^{*}_0$ be the dense $G_\d$ subset of $X^*$ in Theorem \ref{polsaturation}, and set $\Omega=\pi^{-1}(\Omega_1)\cap\Omega_2\cap\varsigma^*(X^{*}_0)$, which is a dense $G_\delta$ subset of $X$ (as $\varsigma^* $ is semi-open, $\varsigma^*(X_0^*)$ is also a dense $G_\d$ subset of $X$).

Now let $x\in \Omega$, and $x_1,\cdots,x_d\in\pi^{-1}\pi(x).$ Since $\varsigma\pi^*=\pi\varsigma^*$, we get that
$$(\pi^*)^{-1}\varsigma^{-1}\pi(x)=(\varsigma^*)^{-1}\pi^{-1}\pi(x).$$
Let $\{y\}=(\varsigma^*)^{-1}(x)$. Note that since $\pi(x)\in \Omega_1$, we have $|\varsigma^{-1}\pi(x)|=1$
and in fact, $\varsigma^{-1}\pi(x)=\{\pi^*y\}$. So there are $y_1,\ldots,y_d\in (\pi^*)^{-1}\pi^*(y)$ with $\varsigma^*(y_i)=x_i, 1\le i\le d$.

By Theorem \ref{polsaturation}, $(y_1,\ldots, y_d)\in \overline{\{(T^{p_1(n)}y, \ldots, T^{p_d(n)}y): n\in\Z\}}$ which implies that
$$(x_1,\ldots, x_d)\in \overline{\{(T^{p_1(n)}x, \ldots, T^{p_d(n)}x): n\in\Z\}}.$$
\end{proof}

Let $G$ be a locally compact group, written with multiplicative notation, and let $m_G$ be its Haar measure. A {\em F{\o}lner sequence} in $G$ is a sequence $\bm{\Phi}=(\Phi_N)_{N\in\mathbb{N}}$ of compact subsets of $G$ each of which has positive Haar measure and such that for any $g\in G$,
$$\frac{m_{G}(g\Phi_{N}\Delta\Phi_N)}{m_G(\Phi_N)}\to 0, \ as \ N\to\infty,$$
where $\Delta$ denotes the symmetric difference and $g\Phi_N=\{gh:h\in\Phi_N\}.$

\medskip

For the definition of the semi-norm of order $k$ see \cite{HK05}. The convergence of the multiple ergodic averages in $L^2$ was studied in
\cite{HK05-1, Lei05-1}. We need a result from \cite[Theorem 3]{Lei05-1}.
\begin{thm}\label{characteristic}
Let $(X,\mathcal{X},\mu,T)$ be an ergodic system. For any $r, b\in \N$ there exists $k\in\N$ such that for any system of
distinct non-constant  polynomials $p_1,\ldots,p_r: \Z^d\lra \Z$ of degree $\le b$
and any $f_1,\ldots,f_r\in L^\infty$ with $|||f_1|||_k=0$, one has
$$\lim_{N\ra \infty} \frac{1}{|\Phi_N|}\sum_{u\in \Phi_N}T^{p_1(u)}f_1\cdots T^{p_r(u)}f_r = 0$$
in $L^2$ for any F{\o}lner sequence $\Phi_N$ in $\Z^d$.
\end{thm}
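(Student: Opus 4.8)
The plan is to follow the classical route for bounds of this kind: van der Corput's inequality combined with Bergelson's \emph{polynomial exhaustion technique} (PET induction), followed by the identification of the terminal expression with a Host--Kra seminorm. First I would fix bounded $f_1,\ldots,f_r$ and set $v_u=T^{p_1(u)}f_1\cdots T^{p_r(u)}f_r\in L^2(\mu)$ for $u\in\Z^d$. By the Hilbert-space van der Corput lemma — in a form valid along an arbitrary F{\o}lner sequence, with an auxiliary $\limsup$ over a F{\o}lner sequence of shifts $h\in\Z^d$ — one has
\[
\limsup_{N}\Big\|\tfrac1{|\Phi_N|}{\textstyle\sum_{u\in\Phi_N}}v_u\Big\|_{L^2}^2\ \le\ \limsup_{H}\tfrac1{|\Phi_H|}{\textstyle\sum_{h\in\Phi_H}}\limsup_{N}\Big|\tfrac1{|\Phi_N|}{\textstyle\sum_{u\in\Phi_N}}\langle v_{u+h},v_u\rangle\Big|.
\]
Expanding $\langle v_{u+h},v_u\rangle$ as an integral of the product $\prod_i T^{p_i(u+h)}\bar f_i\cdot\prod_i T^{p_i(u)}f_i$ and using $T$-invariance of $\mu$ to translate by $T^{-p_1(u+h)}$, one rewrites it as an integral over a \emph{new} finite system of at most $2r-1$ polynomials in $u$ of degree $\le b$ (now carrying $h$ as a parameter), in which the distinguished function $\bar f_1$ survives, attached to the identically-zero polynomial.

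The core of the argument is the PET induction. One attaches to a finite polynomial system a ``weight'' — a finite multiset recording, in decreasing order, the degrees of the polynomials and of their pairwise differences — well-orders these weights by an ordinal whose height is bounded in terms of $r$ and $b$ only, and checks that the van der Corput step above strictly decreases the weight. Consequently, after $\ell=\ell(r,b)$ steps, a number depending only on the count $r$ and the degree bound $b$ and \emph{not} on the $p_i$ themselves, the system degenerates, and the original average is bounded, up to a constant and a power and up to the iterated shifts $h_1,\ldots,h_\ell\in\Z^d$, by a box/Gowers-type average in which the only surviving function is $f_1$. Since $(X,\X,\mu,T)$ is ergodic, averaging this terminal expression over the $h_j$ and invoking the Host--Kra description of the seminorms bounds it by a constant multiple of a power of $|||f_1|||_k$ for a suitable $k=k(\ell)$, hence $k=k(r,b)$. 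Therefore $|||f_1|||_k=0$ forces $\limsup_N\|\tfrac1{|\Phi_N|}\sum_{u\in\Phi_N}v_u\|_{L^2}=0$, which is the claim.

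Two points need care. To ensure that it is the seminorm of $f_1$ — rather than of some other $f_i$ — that controls the bound, the order in which the blocks are eliminated by successive van der Corput steps must be chosen so that the block carrying $f_1$ is removed last; this is possible precisely because each difference $p_1-p_j$, $j\ne1$, is non-constant, i.e.\ the $p_i$ are pairwise essentially distinct. The genuinely technical obstacle is the multivariable bookkeeping: because the $p_i$ map $\Z^d$ (not $\Z$) to $\Z$ and the van der Corput shift $h$ ranges over $\Z^d$, the weight must be defined via multivariable leading forms, and one must verify that it still strictly decreases at each step and that the total number of steps stays uniform over all admissible systems of degree $\le b$. This is exactly the content of \cite[Theorem 3]{Lei05-1}; the remaining ingredients — van der Corput along F{\o}lner sequences and the identification of box averages with Host--Kra seminorms over an ergodic system — are by now standard.
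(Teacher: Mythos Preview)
The paper does not prove this theorem at all: it is quoted verbatim as \cite[Theorem~3]{Lei05-1} and used as a black box. Your sketch is a correct outline of Leibman's argument --- van der Corput along F{\o}lner sequences, PET induction on a well-ordered weight attached to the polynomial system, and termination in a Gowers--Host--Kra seminorm of $f_1$ --- and indeed you explicitly note that this is the content of \cite[Theorem~3]{Lei05-1}; so you have supplied more than the paper does, and there is nothing to compare.
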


For an ergodic system $(X,\mathcal{X},\mu,T)$, the authors in \cite{HK05} showed that it admits a maximal $k$-step pro-nilfcator, denoted by $Z_k$. We remark that for an $\infty$-step minimal pro-nilsystem, $X_k=Z_k$, where the measure is the unique measure on $Z_k$.

Let $(Z_{k},\mathcal{Z}_{k},\mu_k,T)$ be the maximal $k$-step pro-nilfcator of an ergodic system $(X,\mathcal{X},\mu,T)$.
It is known in \cite{HK05} that for $f\in L^\infty$, $\mathbb{E}(f|\mathcal{Z}_{k-1})=0\Leftrightarrow |||f|||_k=0,$
 where $\mathbb{E}(f|\mathcal{Z}_{k-1})$ is the conditional expectation. For $f\in L^\infty$, write $f=(f-\mathbb{E}(f|\mathcal{Z}_{k-1})+\mathbb{E}(f|\mathcal{Z}_{k-1})$. It is easy to see that
$\mathbb{E}((f-\mathbb{E}(f|\mathcal{Z}_{k-1}))|Z_{k-1})=0$.
So in Theorem \ref{characteristic}, we can replace $f_i$ by $\mathbb{E}(f_i|\mathcal{Z}_{k-1})$.
Thus, we can rewrite the conclusion of Theorem \ref{characteristic} as:
\begin{equation}\label{constant-k}
\vert\vert\lim_{N\to\infty}\frac{1}{N}\sum_{n=0}^{N-1}(\prod_{i=0}^{d}T^{p_{i}(n)}f_i-\prod_{i=0}^{d}T^{p_{i}(n)}
\mathbb{E}(f_i|\mathcal{Z}_{k}))\vert\vert_{L^2(\mu)}= 0,
\end{equation}
for some $k$ only depending on the the polynomials $p_1,\cdots,p_d$.

\medskip
The other result we need is the following theorem from \cite[Theorem A]{BL96}.
\begin{thm}\label{polbl}
Let $(X,\mathcal{X},\mu,T)$ be a measure preserving transformation and let $A\in\mathcal{X}$ be a set with positive measure, then for each $C=\{p_1,\ldots, p_k\}$ of distinct non-constant integral polynomials vanishing at 0,
$$\liminf_{N\to\infty}\frac{1}{N}\sum_{n=0}^{N-1}\mu(A\cap T^{-p_1(n)}A\cap T^{-p_2(n)}A\cap\cdots \cap T^{-p_{k}(n)}A)>0.$$
\end{thm}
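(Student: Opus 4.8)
The plan is to reduce, via the ergodic decomposition, to an ergodic system, then to transfer the whole average onto the maximal finite‑step pro‑nilfactor supplied by Theorem~\ref{characteristic}, and finally to settle the matter on a nilsystem using the equidistribution theory of polynomial sequences on nilmanifolds.

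\emph{Reduction to the ergodic case.} Passing to the natural extension if necessary, we may assume $T$ is invertible. Let $\mu=\int\mu_\omega\,d\mathbb{P}(\omega)$ be the ergodic decomposition of $\mu$. Since $\int\mu_\omega(A)\,d\mathbb{P}(\omega)=\mu(A)>0$, the set $\{\omega:\mu_\omega(A)>0\}$ has positive $\mathbb{P}$‑measure, and
$$\mu\Big(A\cap\bigcap_{i=1}^{k}T^{-p_i(n)}A\Big)=\int\mu_\omega\Big(A\cap\bigcap_{i=1}^{k}T^{-p_i(n)}A\Big)\,d\mathbb{P}(\omega).$$
By Fatou's lemma it therefore suffices to prove the statement when $(X,\mathcal X,\mu,T)$ is ergodic.

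\emph{Passing to the characteristic factor.} Assume ergodicity, let $s\in\N$ be the integer provided by Theorem~\ref{characteristic} for the family $p_1,\ldots,p_k$ (there denoted $k$), and let $(Z_s,\mathcal Z_s,\mu_s,T)$ be the corresponding maximal $s$‑step pro‑nilfactor; put $g=\E(1_A\mid\mathcal Z_s)$, so that $0\le g\le 1$ and $\int g\,d\mu=\mu(A)>0$. Applying \eqref{constant-k} with $p_0=0$, $f_0=1_A$ and $f_1=\cdots=f_k=1_A$, and noting that $\prod_{i=1}^{k}T^{p_i(n)}g$ is $\mathcal Z_s$‑measurable (so it may be integrated against $g$ in place of $1_A$), we obtain
$$\Big|\frac1N\sum_{n=0}^{N-1}\Big(\mu\big(A\cap\textstyle\bigcap_{i=1}^{k}T^{-p_i(n)}A\big)-\int_{Z_s} g\cdot\textstyle\prod_{i=1}^{k}T^{p_i(n)}g\,d\mu_s\Big)\Big|\longrightarrow 0$$
as $N\to\infty$. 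Hence it is enough to bound below the $\liminf$ of the averages $\frac1N\sum_{n}\int_{Z_s}g\prod_{i=1}^{k}T^{p_i(n)}g\,d\mu_s$ on the $s$‑step pro‑nilsystem $(Z_s,\mu_s,T)$.

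\emph{Reduction to a nilsystem and equidistribution.} As $Z_s$ is an inverse limit of $s$‑step nilsystems, fix a finite‑step nilsystem factor $\kappa\colon Z_s\to N=G/\Gamma$ and replace $g$ by $g'=\E(g\mid\kappa^{-1}\mathcal B_N)$; since $0\le g,g'\le 1$, the error in each summand is at most $(k+1)\|g-g'\|_{L^1(\mu_s)}$, uniformly in $n$, while $\int g'\,dm_N=\mu(A)$. On $N$ one invokes Leibman's theorem on the equidistribution of polynomial orbits on nilmanifolds: for every $x\in N$, writing $t\in G$ for the generating element, the closure $W_x$ of $\{(t^{p_1(n)}x,\ldots,t^{p_k(n)}x):n\in\Z\}$ is a sub‑nilmanifold of $N^{k}$ on which the orbit equidistributes, so $\frac1N\sum_n g'(x)\prod_i g'(t^{p_i(n)}x)$ converges; integrating in $x$, the average converges to $\int_{W}g'\otimes\cdots\otimes g'\,dm_{W}$, where $W\subseteq N^{k+1}$ is a sub‑nilmanifold carrying its Haar measure $m_W$. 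Because $p_i(0)=0$, the value at $n=0$ shows that $W$ contains the diagonal $\Delta_{k+1}(N)$.

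\emph{Positivity, and the main obstacle.} What remains — and this is the technical heart — is a strictly positive lower bound for $\int_{W}g'^{\otimes(k+1)}\,dm_W$ depending only on $\mu(A)$, so that it survives the $L^1$‑approximation of the previous step (whose error tends to $0$ as the nilfactor $N$ grows). The idea is that, since $W\supseteq\Delta_{k+1}(N)$ and $m_W$ has full support on $W$, the measure $m_W$ charges the set of $(k+1)$‑tuples whose coordinates all lie in a fixed set of positive $m_N$‑measure on which $g'$ is bounded below by a positive constant; making this quantitative — and uniform over the nilfactors $N$ — rests on the precise homogeneous structure of $W$ furnished by Leibman's theorem (for instance that the coordinate projections $W\to N$ push $m_W$ forward to $m_N$, which permits a disintegration argument along the diagonal). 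This is the step I expect to be the main obstacle. An alternative route, not using Theorem~\ref{characteristic}, is the original Furstenberg‑type scheme: a PET‑induction on the complexity of the polynomial family, using van der Corput estimates to descend to simpler families, combined with Furstenberg's structure theorem — weakly mixing extensions being handled by passing to conditional expectations onto the distal factor, and compact extensions by an almost‑periodicity argument — and there the compact‑extension step is the delicate one.
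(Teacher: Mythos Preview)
The paper does not prove this theorem: it is quoted from Bergelson--Leibman \cite[Theorem~A]{BL96} as a known result and then used as a black box in the proof of Theorem~\ref{distal}. There is thus no ``paper's own proof'' to compare your proposal against.

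Your outline follows a legitimate post--2005 strategy (characteristic factors via Theorem~\ref{characteristic}, then equidistribution on nilmanifolds), but, as you yourself flag, it is incomplete at the decisive point. Two concrete issues. First, the sentence ``integrating in $x$, the average converges to $\int_{W}g'^{\,\otimes(k+1)}\,dm_{W}$, where $W\subseteq N^{k+1}$ is a sub-nilmanifold'' is not justified: Leibman's theorem gives, for each $x\in N$, equidistribution on a sub-nilmanifold $W_x\subseteq N^{k}$ that in general varies with $x$ (already because $N$ need not be connected), so the limit is $\int_N g'(x)\big(\int_{W_x}(g')^{\otimes k}\,dm_{W_x}\big)\,dm_N(x)$ rather than an integral over a single $W$. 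Second --- and this is the gap you yourself identify --- turning this expression into a strictly positive lower bound that is \emph{uniform across the inverse system of nilfactors} (so that it survives the $L^1$-approximation step) is essentially polynomial multiple recurrence on nilsystems, which is not appreciably easier than the theorem you are trying to prove. The alternative PET-induction route you sketch at the end is in fact the original Bergelson--Leibman argument, and the compact-extension step you single out is precisely where the substantial work lies in \cite{BL96}.
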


\subsection{The distal case}\label{distalcase}
To show the distal case we need a lemma from \cite{4p}.

\begin{lem}\label{fib}\cite[Proposition 5.5]{4p}
Let $(X,T)$ be a strictly ergodic system with a unique invariant measure $\mu$ and let $\pi:(X,T)\to (Y,T)$ be a distal extension. Let $\mu=\int_{Y}\mu_{y}d\nu$ be the disintegration of $\mu$ over $\nu$, where $\nu=\pi_{*}(\mu)$. Then there is $Y_0\subset Y$ with full measure such that for each $y\in Y_0$, $supp(\mu_y)=\pi^{-1}(y)$.
\end{lem}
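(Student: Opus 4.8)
Since $\supp(\mu_y)\subseteq\pi^{-1}(y)$ for $\nu$-a.e.\ $y$ by the defining property of the disintegration, the content of the lemma is the reverse inclusion: one must show that the conditional measure $\mu_y$ has full support in its fibre for $\nu$-a.e.\ $y$. Note that $(Y,T)$, being a factor of the strictly ergodic system $(X,T)$, is again strictly ergodic.

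The plan rests on the following reduction: \emph{it suffices to produce one $T$-invariant Borel probability measure $\mu'$ on $X$ with $\pi_*\mu'=\nu$ and $\supp(\mu'_y)=\pi^{-1}(y)$ for $\nu$-a.e.\ $y$.} Indeed, unique ergodicity of $(X,T)$ then forces $\mu'=\mu$, hence $\mu'_y=\mu_y$ for $\nu$-a.e.\ $y$ by essential uniqueness of the disintegration, and we are done. To construct such a $\mu'$, I would invoke Furstenberg's structure theorem in its relative form: the distal extension $\pi$ is a transfinite inverse limit $X=\varprojlim_{\xi\le\eta}X_\xi$ over $Y$, with $X_0=Y$, $\eta$ a countable ordinal (as $X$ is metrizable), each $X_{\xi+1}\to X_\xi$ an isometric extension, and $X_\lambda=\varprojlim_{\xi<\lambda}X_\xi$ at limit ordinals. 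The plan is to build, by transfinite recursion, a compatible family of $T$-invariant measures $\mu_\xi$ on $X_\xi$ with $(\pi_\xi)_*\mu_\xi=\nu$ and $\supp((\mu_\xi)_y)=\pi_\xi^{-1}(y)$ for $\nu$-a.e.\ $y$. At a successor step, an isometric extension $X_{\xi+1}\to X_\xi$ is a quotient of a group extension $X_\xi\times_\sigma K\to X_\xi$ ($K$ a compact metric group) by a closed subgroup of $K$; one takes for $\mu_{\xi+1}$ the push-forward, under the (fibrewise surjective) quotient map, of the measure on $X_\xi\times_\sigma K$ obtained by placing Haar measure of $K$ over $\mu_\xi$ in each fibre. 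That measure is $T$-invariant by a direct computation using left-invariance of Haar measure, and it---hence also $\mu_{\xi+1}$---has full fibre support over $\nu$. At a limit ordinal $\lambda$, one takes the inverse-limit measure of the compatible family $(\mu_\xi)_{\xi<\lambda}$; full fibre support is inherited, because every nonempty relatively open subset of $\pi_\lambda^{-1}(y)$ contains a nonempty cylinder defined at some level $\xi<\lambda$, which has positive conditional measure for $\nu$-a.e.\ $y$, and only countably many $\nu$-null exceptional sets are discarded altogether. Setting $\mu'=\mu_\eta$ completes the construction.

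The main obstacle is not any single step but the transfinite bookkeeping: choosing the disintegrations consistently all the way up the tower, checking that ``the conditional measures have full fibre support'' is preserved both under an isometric extension and under an inverse limit, and controlling the $\nu$-null exceptional sets so that at most countably many are ever removed. The individual verifications---$T$-invariance and full support of the conditionally-Haar measure, behaviour of supports under fibrewise-surjective factor maps and under inverse limits of compact systems---are routine. An alternative that bypasses the structure theorem would be to argue directly with the relative Ellis group of the distal extension $\pi$, realizing each fibre $\pi^{-1}(y)$ as an orbit of the fibre group and $\mu_y$ as the associated relative Haar measure; this appears to be of comparable length.
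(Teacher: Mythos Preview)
The paper does not supply its own proof of this lemma: it is quoted verbatim as \cite[Proposition~5.5]{4p} and used as a black box in the proof of Theorem~\ref{distal}. So there is no in-paper argument to compare against; I can only assess your proposal on its own merits and against what the cited reference is likely to do.

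Your outline is correct. The reduction in the first paragraph is the decisive observation: once you have \emph{any} $T$-invariant probability $\mu'$ on $X$ projecting to $\nu$ whose conditional measures have full fibre support, unique ergodicity forces $\mu'=\mu$ and you are done. The construction of $\mu'$ via the relative Furstenberg structure theorem is the standard route (it is, in effect, the relative version of Furstenberg's proof that minimal distal systems are uniquely ergodic), and the verifications you list---$T$-invariance of fibrewise Haar measure on a group extension, preservation of full fibre support under the quotient by a closed subgroup, and the inverse-limit step with its countable union of null exceptional sets---are all sound. The metrizability of $X$ indeed guarantees the tower has countable height, so the bookkeeping is manageable.

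That said, the argument in \cite{4p} is almost certainly shorter and does not invoke the structure theorem. The point is that a distal extension between minimal systems is \emph{open}, so $y\mapsto\pi^{-1}(y)$ is continuous in the Hausdorff metric (this is exactly the fact exploited two lines below the lemma in the present paper). One then argues directly with a countable base $\{U_n\}$ of $X$: if $\nu\{y:\mu_y(U_n)=0\ \text{and}\ U_n\cap\pi^{-1}(y)\neq\emptyset\}>0$ for some $n$, one produces a nontrivial closed $T$-invariant subset of $X$ (using that $T_*\mu_y=\mu_{Ty}$ a.e.\ and the continuity of the fibre map to pass to closures), contradicting minimality. Your approach buys a self-contained, conceptually transparent construction at the cost of importing a heavy structural theorem; the direct argument is quicker but leans on the openness of distal extensions, which you did not use.
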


Now we are able to show

\begin{thm}\label{distal}
Let $(X,T)$ be a minimal distal system and $C=\{p_1,\ldots,p_d\}$ be a collection of distinct non-constant integral polynomials vanishing at $0$. Then there is $k\in\N$ (only depending on $C$) such that $(X_k,T)$ is the TCF of order $d$ for $C$ of $(X,T)$.
\end{thm}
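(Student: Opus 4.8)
\textbf{Proof proposal for Theorem \ref{distal}.}

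The plan is to exploit the ergodic-theoretic input (Theorem \ref{characteristic}, in the form \eqref{constant-k}, and Theorem \ref{polbl}) together with the fibre-support lemma (Lemma \ref{fib}) to produce, for almost every fibre of $\pi_k:X\to X_k$, a point whose polynomial orbit visits any prescribed product of open sets lying over the right point of $X_k$. Since a minimal distal system is uniquely ergodic, $(X,T)$ is strictly ergodic; let $\mu$ be its unique invariant measure and $\mu_k=(\pi_k)_*\mu$ the induced measure on $X_k$. Fix the integer $k=k(C)$ supplied by Theorem \ref{characteristic}/\eqref{constant-k} applied to the polynomials in $C$ (with $r=d$, $b=\max_i\deg p_i$, and $\Z^d$ replaced by $\Z$). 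I will show this $k$ works.

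The core step is a measure-theoretic saturation statement: for $\mu_k$-a.e.\ $z\in X_k$ and every choice of non-empty open sets $V_1,\dots,V_d\subset X$ with $z\in\bigcap_{i=1}^d\pi_k(V_i)$, there is a point $x\in\pi_k^{-1}(z)$ and an $n\in\Z$ with $x\in\bigcap_{i=1}^d T^{-p_i(n)}V_i$. To prove this, pick a countable basis of such configurations; for each fixed configuration $(V_1,\dots,V_d)$ and a.e.\ $z$, work in the fibre measure $\mu_z$ of the disintegration $\mu=\int_{X_k}\mu_z\,d\mu_k(z)$. By Lemma \ref{fib}, $\operatorname{supp}(\mu_z)=\pi_k^{-1}(z)$ for $\mu_k$-a.e.\ $z$, so $\mu_z(V_i)>0$ whenever $z\in\pi_k(V_i)$ — here one uses that $\pi_k^{-1}(z)$ meets $V_i$, which holds precisely when $z\in\pi_k(V_i)$. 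Now set $A=\bigcup_i V_i$ (or argue with a common refinement) and apply Theorem \ref{polbl} on $X$ to get a positive lower density of $n$ with $\mu\big(\bigcap_i T^{-p_i(n)}V_i\big)>0$; more precisely, one wants the \emph{fibred} version, obtained by disintegrating and using \eqref{constant-k} to replace each $\mathbf 1_{V_i}$ by its conditional expectation onto $\mathcal Z_k=\mathcal{Z}_{k}$, since on $X_k=Z_k$ the $p_i(n)$ act as the given transformation and the limit of the averages is computed there. This yields, for a.e.\ $z$, some $n$ with $\mu_z\big(\bigcap_i T^{-p_i(n)}V_i\big)>0$, hence the desired point $x\in\pi_k^{-1}(z)\cap\bigcap_i T^{-p_i(n)}V_i$.

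Once the measure-theoretic statement holds for a fixed countable family of configurations on a common full-measure set $Z_0\subset X_k$, I pass to topology. Take a countable basis $\{W_j\}$ of $X$ and let the relevant configurations be all finite tuples of basic open sets; for $x\in\pi_k^{-1}(Z_0)$, density of $\{(T^{p_1(n)}x,\dots,T^{p_d(n)}x):n\in\Z\}$ in $(\pi_k^{(d)})^{-1}(\Delta_d$-neighbourhood of $\pi_k^{(d)}(x^{(d)}))$ follows, i.e.\ $(\pi_k^{(d)})^{-1}\pi_k^{(d)}(x^{(d)})\subset L_x^C$. To get a dense $G_\delta$ set $\Omega\subset X$ rather than just a full-measure set, use that $\operatorname{supp}(\mu)=X$ (minimality) so $\pi_k^{-1}(Z_0)$ has full measure, and then invoke Lemma \ref{veech-1} to thicken to an actual dense $G_\delta$; alternatively, phrase the target set of $x$'s directly as a countable intersection of dense open sets (for each basic configuration, the set of $x$ for which some $n$ works is open, and density comes from the measure statement plus the fact that distal factor maps are open). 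Either way, since $\pi_k$ is open (the factor map between distal systems is open, as recalled before Lemma \ref{veech-1}), the saturation upgrades from pointwise on $\Omega$ to $(\pi_k^{(d)})^{-1}\pi_k^{(d)}(L_x^C)=L_x^C$, which is exactly the assertion that $X_k$ is the TCF of order $d$ for $C$.

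The main obstacle I anticipate is the bookkeeping in the fibred application of \eqref{constant-k} and Theorem \ref{polbl}: one must be careful that the conditional expectations $\mathbb E(\mathbf 1_{V_i}\mid\mathcal Z_k)$ are $\mu_k$-a.e.\ strictly positive at the right points $z$ (which is where Lemma \ref{fib} and $\operatorname{supp}\mu_z=\pi_k^{-1}(z)$ are essential), and that the resulting a.e.\ statement can be made uniform over a countable dense family of open configurations on a \emph{single} full-measure, hence (after Lemma \ref{veech-1}) dense $G_\delta$, set of base points. The interplay between "$\mu$-a.e." and "dense $G_\delta$", mediated by openness of $\pi_k$ and Lemma \ref{veech-1}, is the delicate part; the positivity and recurrence inputs themselves are black-box.
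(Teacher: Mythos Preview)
There is a genuine gap at the very first step: you assert that ``a minimal distal system is uniquely ergodic,'' but this is false. There are classical examples (going back to Furstenberg's work on strict ergodicity of skew products on the torus) of minimal distal $\Z$-actions that carry more than one invariant measure. Everything that follows in your plan---the disintegration $\mu=\int\mu_z\,d\mu_k$, the use of Lemma~\ref{fib} to identify $\operatorname{supp}(\mu_z)=\pi_k^{-1}(z)$, and the identification of the measure-theoretic factor $Z_k$ with the topological factor $X_k$---relies on having a canonical ergodic measure $\mu$ with full support on $X$. Without unique ergodicity there is no single $\mu$ to speak of, Lemma~\ref{fib} does not apply as stated (its hypothesis is ``strictly ergodic''), and the bridge from ``$\mu$-a.e.'' statements to dense $G_\delta$ sets in $X$ collapses.

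The paper avoids this problem by a two-step factorisation that you have bypassed: it first invokes Theorem~HSY to conclude that $X_\infty$ is already a TCF of order $d$ for $C$ of $(X,T)$ (this uses only that the distal factor map $X\to X_\infty$ is open), and then carries out the ergodic argument---essentially the one you outline---\emph{on $X_\infty$ rather than on $X$}. The point is that a minimal $\infty$-step pro-nilsystem \emph{is} strictly ergodic, so on $X_\infty$ one legitimately has a unique invariant measure, Lemma~\ref{fib} applies, and $Z_k=X_k$. After proving the Claim that $U_0\cap T^{-p_1(n)}U_1\cap\cdots\cap T^{-p_d(n)}U_d\neq\emptyset$ whenever $\bigcap_i\pi_k(U_i)\neq\emptyset$ (via \eqref{constant-k} and Theorem~\ref{polbl}, much as you sketch), the paper then cites \cite[Theorem~3.6]{HSY-22-2} to pass from this intersection statement to the TCF conclusion, and finally composes the two TCF maps $X\to X_\infty\to X_k$. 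Your argument can be repaired by inserting this reduction to $X_\infty$ at the outset. (A smaller point: Lemma~\ref{veech-1} does not ``thicken'' a full-measure set to a dense $G_\delta$; it goes in the opposite direction, restricting a dense $G_\delta$ of $X$ to generic fibres. The passage from the measure-theoretic claim to a topological one is handled in the paper by the cited result from \cite{HSY-22-2}, not by Lemma~\ref{veech-1}.)
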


\begin{proof} First we know that $(X_{\infty},T)$ is the TCF of order $d$ for $C$ of $(X,T)$ by Theorem HSY, since the factor
map $X\ra X_\infty$ is open. It remains to show $(X_k,T)$ is TCF of order $d$ for $C$ of $(X_{\infty},T)$, as the composition of two factor maps
with TCF keeps the TCF property (one can check this directly, or see it from the proof of Theorem A).

\medskip
Let $\pi_k:X_\infty\to X_k$ be the factor map, and $k$ be the number defined in (\ref{constant-k}). Set $\mu$ be the uniquely ergodic measure of $(X_{\infty},T)$, and $(Z_k,\mathcal{Z}_{k},\mu_k,T)$ be the maximal $k$-step pro-nilfactor of $(X_{\infty},\mathcal{X}_\infty,\mu,T)$. It is known that
$Z_k=X_k$, and so $\pi_k$ can be viewed as the continuous factor map from $(X_\infty,\mu,T)$ to $(Z_k,\mu_k,T)$.

Let $\mu=\int_{Z_k}\mu_{z}d\mu_{k}(z)$ be the disintegration of $\mu$ over $\mu_k$. For $d\in\N$, let $L_{d,k}^{\mu}={\rm Supp}(\lambda_{d}^{k})$, where
$$\lambda_{d}^{k}=\int_{Z_k}\prod_{j=1}^d\mu_{y}\ d\mu_{k}(y).$$

Similar to the argument in \cite[Theorem 5.9]{4p},
we now show that ${\rm Supp}(\lambda_{d}^{k})=R_{\pi_k}^d$. First we note that $\lambda_d^k(R_{\pi_k}^d)=1$, so
${\rm Supp}(\lambda_{d}^{k})\subset R_{\pi_k}^d$. By Lemma \ref{fib}  there is a measurable set
$Y_0\subset X_{d}$ with full measure such that for any $y\in Y_0$, ${\rm Supp}(\mu_y)={\pi_k}^{-1}(y)$. Let $W= {\rm Supp}(\lambda_{d}^{k})$. Since
$$\lambda_{d}^{k}(W)=\int_{Y_0}\prod_{j=1}^d\mu_{y}(W)\ d\mu_k(y)=1,$$
we have that for a.e. $y\in Y$, $\prod_{j=1}^d\mu_{y}(W)=1$. This implies that
$\prod_{j=1}^d{\rm Supp}(\mu_y)\subset W$, a.e. $y\in Y$. Thus by the distality of $\pi_k$, the map $y\mapsto \pi_k^{-1}(y)$ (from $X_k$ to $2^{X_\infty}$) is continuous and we conclude that $R_{\pi_k}^d\subset{\rm Supp}(\lambda_{d}^{k})$. Thus, we get that ${\rm Supp}(\lambda_{d}^{k})=R_{\pi_k}^d$.

\medskip

Now let $C=\{p_1,\ldots,p_d\}$ be the polynomials mentioned in the theorem. We now show a claim:

\noindent$\mathbf{Claim}$: 
For any non-empty open subsets $U_0,\cdots,U_d$ of $X_\infty$ with $\bigcap_{i=0}^{d}\pi_k(U_i)\neq \emptyset$, 
$$U_0\cap T^{-p_{1}(n)}U_1\cap T^{-p_{2}(n)}U_2\cap\cdots\cap T^{-p_{d}(n)}U_d\neq \emptyset.$$

\begin{proof}
By (\ref{constant-k}) we have $$\vert\vert\lim_{N\to\infty}\frac{1}{N}\sum_{n=0}^{N-1}(\prod_{i=1}^{d}T^{p_{i}(n)}f_i-\prod_{i=1}^{d}T^{p_{i}(n)}\mathbb{E}_{\mu}(f_i|\mathcal{Z}_{k}))\vert\vert_{L^2(\mu)}= 0.$$
Let $(x_0,x_1,\cdots,x_d)\in R_{\pi_k}^{d+1}={\rm Supp}(\lambda_{d+1}^{k})$, and let $(U_0,U_1,\cdots,U_d)$ be a neighborhood of $(x_0,x_1,\cdots,x_d)$,
then it follows that 
\begin{equation}\label{conexp}
\lambda_{k}(U_0\times U_1\times\cdots\times U_d)=\int_{Z_{k}}\E(1_{U_0}|\mathcal{Z}_{k})\E(1_{U_1}|\mathcal{Z}_{k})\cdots \E(1_{U_d}|\mathcal{Z}_{k}) d\mu_{{k}}>0.
\end{equation}

By (\ref{constant-k}), we have
\begin{equation*}
\begin{split}
     &\lim_{N\to\infty}\frac{1}{N}\sum_{n=0}^{N-1}\mu(U_0\cap T^{-p_1(n)}U_1\cap T^{-p_2(n)}U_2\cap\cdots\cap T^{-p_{d}(n)}U_d)\\
       = & \lim_{N\to\infty}\frac{1}{N}\sum_{n=0}^{N-1}\int_{X}1_{U_0}(x)1_{U_1}(T^{p_1(n)}x)1_{U_2}(T^{p_2(n)}x)\cdots1_{U_d}(T^{p_{d}(n)}x) d\mu(x)\\
       = &\lim_{N\to\infty}\frac{1}{N}\sum_{n=0}^{N-1}\int_{Z_{k}}\mathbb{E}(1_{U_0}|\mathcal{Z}_{k})(z)
      \mathbb{E}(1_{U_1}|\mathcal{Z}_{k})(T^{p_{1}(n)}z)\cdots\mathbb{E}(1_{U_d}|\mathcal{Z}_{k})(T^{p_{d}(n)})d\mu_{k}(z)\\
     \geq &\liminf_{N\to\infty}\frac{1}{N}\sum_{n=0}^{N-1}a^{d+1}\int_{Z_k}1_{A_a}(z)1_{A_a}(T^{p_1(n)}z)\cdots1_{A_a}(T^{p_{d}}(n)z)d\mu_{k}(z)\\
      =&\liminf_{N\to\infty}\frac{1}{N}\sum_{n=0}^{N-1}a^{d+1}\mu_k(A_a\cap T^{-p_1(n)}A_a\cap T^{-p_2(n)}A_a\cap\cdots\cap T^{-p_{d}(n)}A_a),
\end{split}
\end{equation*}
where $a>0$ and
$$A_a=\{z\in Z_{k}:\mathbb{E}(1_{U_0}|\mathcal{Z}_{k})(z)>a, \mathbb{E}(1_{U_1}|\mathcal{Z}_{k})(z)>a,\cdots,\mathbb{E}(1_{U_d}|\mathcal{Z}_{k})(z)>a\}.$$

As $\mathbb{E}(1_{U_i}|\mathcal{Z}_{k})\leq1$ , $i=0,1,\cdots, d$, by (\ref{conexp}) we can get that
\begin{equation*}
\begin{split}
0<b&:=\int_{Z_k}\mathbb{E}(1_{U_0}|\mathcal{Z}_{k})\mathbb{E}(1_{U_1}|\mathcal{Z}_{k})\cdots \mathbb{E}(1_{U_d}|\mathcal{Z}_{k})d\mu_{k}\\
&=\int_{A_a}\mathbb{E}(1_{U_0}|\mathcal{Z}_{k})\cdots\mathbb{E}(1_{U_d}|\mathcal{Z}_{k})d\mu_{k}+\int_{Z_{k}\setminus A_a}\mathbb{E}(1_{U_0}|\mathcal{Z}_{k})\cdots\mathbb{E}(1_{U_d}|\mathcal{Z}_{k})d\mu_{k}\\
&\leq\mu_{k}(A_a)+a\mu_{k}(Z_{k}\setminus A_a)=a+(1-a)\mu_{k}(A_a),\\
\end{split}
\end{equation*}
here we use that fact that for $z\in Z_{k}\setminus A_a$, $\mathbb{E}(1_{U_d}|\mathcal{Z}_{k})(z)\le a$. Hence there exists $a>0$ such that $\mu_{k}(A_a)>0$. 
So by Theorem \ref{polbl}
$$\lim_{N\to\infty}\frac{1}{N}\sum_{n=0}^{N-1}\mu(U_0\cap T^{-p_1(n)}U_1\cap\cdots\cap T^{-p_{d}(n)}U_d)>0.$$
Then there exists $n\in\mathbb{N}$ such that
$$U_0\cap T^{-p_{1}(n)}U_1\cap T^{-p_{2}(n)}U_2\cap\cdots\cap T^{-p_{d}(n)}U_d\neq\emptyset.$$
This ends the proof of the claim. 
\end{proof}

Now we show that $(X_k,T)$ is the TCF of order $d$ for $C$ of $(X_\infty,T)$ using the claim we just proved. This is exact the proof of  \cite[Theorem 3.6]{HSY-22-2}. This ends the proof.
\end{proof}

\subsection{Proof of Theorem A}
In this subsection we show Theorem A. For a compact metric space $X$, we use $2^X$ to denote the set of all non-empty closed subsets of $X$ (with Hausdorff topology). We need a lemma which can be verified directly by results in \cite{C}.
\begin{lem}\label{easy}
Let $\pi: (X,T) \ra (Y,T)$ be a factor map between minimal t.d.s. and let $\Pi: Y \ra 2^X$ be  defined by $y\mapsto \pi^{-1}(y)$. Then we have
$\{y\in Y: \Pi\  \text{is continuous at}\ y\}$ is a dense $G_\delta$ set of $Y$.
\end{lem}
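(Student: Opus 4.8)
The plan is to prove Lemma~\ref{easy} as a standard consequence of the upper semi-continuity of the fiber map together with the openness of the domain, and then to invoke minimality to upgrade ``residual'' to ``dense $G_\delta$''. More precisely, I would first record that the map $\Pi:Y\to 2^X$, $y\mapsto \pi^{-1}(y)$, is upper semi-continuous: if $y_n\to y$ and $x_n\in\pi^{-1}(y_n)$ with $x_n\to x$, then $\pi(x)=\lim\pi(x_n)=\lim y_n=y$, so $x\in\pi^{-1}(y)$; this says exactly that the graph of $\Pi$ is closed in $Y\times 2^X$, equivalently that $\Pi$ is upper semi-continuous into the Hausdorff hyperspace (equivalently, for every open $U\subseteq X$ the set $\{y: \pi^{-1}(y)\subseteq U\}$ is open in $Y$). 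It is a classical fact — which is what I would cite from \cite{C} — that an upper semi-continuous map from a (locally) compact metric space into the hyperspace $2^X$ of a compact metric space has a residual set of continuity points; more concretely, the set of continuity points of $\Pi$ is exactly $\bigcap_{\ep>0}\{y: \Pi\text{ is ``$\ep$-lower semi-continuous'' at }y\}$, and each of these sets is open and, by minimality, dense in $Y$.

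The key steps, in order, are: (1) verify upper semi-continuity of $\Pi$ from continuity of $\pi$ and compactness of $X$, as above; (2) express the oscillation of $\Pi$ at $y$ via the Hausdorff metric, $\mathrm{osc}_\Pi(y)=\inf_{V\ni y}\diam_H\Pi(V)$, and note $\{y:\mathrm{osc}_\Pi(y)<1/m\}$ is open for each $m\in\N$; (3) observe that the set of continuity points of $\Pi$ is $\bigcap_{m\ge1}\{y:\mathrm{osc}_\Pi(y)<1/m\}$, hence a $G_\delta$; (4) show each $\{y:\mathrm{osc}_\Pi(y)<1/m\}$ is dense. For step~(4) the cleanest argument uses that $\pi$ is semi-open (true for any factor map between minimal systems, as recalled in the excerpt): given a nonempty open $W\subseteq Y$, pick a nonempty open $U\subseteq X$ with $\pi(U)$ having nonempty interior contained in $W$; a Baire-category argument inside $U$, or the semi-openness applied to a suitable small ball, produces a point $y\in W$ at which the fiber cannot ``jump up'' by as much as $1/m$. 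Alternatively, one invokes directly the Fort-type theorem that an u.s.c.\ set-valued map on a Baire space is lower semi-continuous (hence continuous, hence oscillation zero) on a dense $G_\delta$; since $Y$ is a compact metric space it is Baire, and this already gives both the $G_\delta$ and the density at once, which is presumably the ``results in \cite{C}'' the authors have in mind.

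I expect the main obstacle to be purely one of packaging: deciding how much of the hyperspace machinery to spell out versus how much to attribute to \cite{C}. The density of the continuity set — equivalently, that an u.s.c.\ compact-valued map on a Baire space is continuous on a residual set — is the only nontrivial input, and it is genuinely classical (Fort's theorem / Kuratowski), so the honest move is to cite it and give the two-line check that $\Pi$ is u.s.c.\ and compact-valued. The one place where minimality of $(X,T)$ and $(Y,T)$ is actually used (as opposed to mere compactness) is to guarantee $Y$ has no isolated quirks and that $\pi$ is semi-open, which is what prevents the continuity set from being merely residual-in-a-meager-subspace; but since $Y$ is already a compact metric space the Baire category theorem applies regardless, so minimality here is really only being used to keep the statement in the category of minimal systems and to have $\pi$ semi-open if one prefers the hands-on density argument. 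Thus the write-up is: state u.s.c., cite \cite{C} for the residual continuity set, conclude.
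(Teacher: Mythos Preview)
Your proposal is correct and matches the paper's approach: the paper gives no argument at all beyond the sentence ``can be verified directly by results in \cite{C}'', and what you outline (upper semi-continuity of $y\mapsto\pi^{-1}(y)$ from continuity of $\pi$ and compactness of $X$, then the Fort/Choquet theorem that an u.s.c.\ compact-valued map on a Baire space is continuous on a dense $G_\delta$) is precisely the content being cited. Your observation that minimality is not actually needed here---compact metric already gives Baire, and u.s.c.\ holds for any continuous surjection between compact metric spaces---is correct and worth keeping in mind; the semi-openness route you sketch for density is unnecessary once you invoke Fort's theorem directly.
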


Another lemma we need is a direct consequence of Theorem HSY
\begin{lem}\label{dir-hsy}
Let $d\in \N$, $(X,T)$ be a minimal system, and $\pi_1:X\rightarrow X_\infty$ be the factor map. Then there is a dense $G_\delta$ set $\Omega_1$ of $X$ such that for any $x\in \Omega_1$, any distinct non-constant integral polynomials $p_1, p_2,\ldots, p_d$ vanishing at $0$ and any $m\in Z$
$$\prod_{i=1}^d\pi_1^{-1}\pi_1(T^{p_i(m)}x)\subset \overline{\{(T^{p_1(n)}x, \ldots, T^{p_d(n)}x): n\in\Z\}}.$$
\end{lem}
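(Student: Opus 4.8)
\textbf{Proof proposal for Lemma~\ref{dir-hsy}.}

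The plan is to derive this directly from Theorem HSY by a countable-intersection argument together with $T$-invariance. First I would recall that Theorem HSY produces, for a \emph{fixed} tuple $C=\{p_1,\ldots,p_d\}$, a dense $G_\delta$ set $\Omega_C\subseteq X$ on which the saturation inclusion $(\pi_1^{(d)})^{-1}\pi_1^{(d)}(x^{(d)})\subseteq L_x^C$ holds. The subtlety is that Lemma~\ref{dir-hsy} asks for a \emph{single} dense $G_\delta$ set $\Omega_1$ that works simultaneously for \emph{all} choices of distinct non-constant integral polynomials $p_1,\ldots,p_d$ vanishing at $0$ and for all shifts $m\in\Z$. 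I would handle the "all polynomials" part by observing that the set of all such $d$-tuples $C$ is countable (integral polynomials form a countable set), so $\Omega_0:=\bigcap_C \Omega_C$, the intersection over all admissible tuples, is still a dense $G_\delta$ set of $X$ by the Baire category theorem, since $X$ is a compact metric space and hence a Baire space.

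Next I would address the shift parameter $m$. For $x\in\Omega_0$ and a fixed tuple $C$, the point $T^{p_i(m)}x$ lies on the orbit of $x$, and I want to compare $\prod_{i=1}^d \pi_1^{-1}\pi_1(T^{p_i(m)}x)$ against $L_x^C$ rather than against $L_{T^{p_i(m)}x}$ for various basepoints. The key manipulation is a reindexing: consider the tuple of polynomials $q_i(n) := p_i(n+m) - p_i(m)$. Each $q_i$ is again an integral polynomial vanishing at $0$, the $q_i$ are still pairwise distinct and non-constant (translation and vertical shift do not change $p_i-p_j$ up to a constant, nor the degree), so $C' = \{q_1,\ldots,q_d\}$ is an admissible tuple and $x\in\Omega_0\subseteq\Omega_{C'}$. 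Applying Theorem HSY to $C'$ at the point $x$ gives $(\pi_1^{(d)})^{-1}\pi_1^{(d)}(x^{(d)})\subseteq L_x^{C'} = \overline{\{(T^{q_1(n)}x,\ldots,T^{q_d(n)}x):n\in\Z\}}$. Now apply the homeomorphism $T^{p_1(m)}\times\cdots\times T^{p_d(m)}$ of $X^d$ to this inclusion: the right-hand side becomes $\overline{\{(T^{p_1(n+m)}x,\ldots,T^{p_d(n+m)}x):n\in\Z\}} = L_x^C$, while the left-hand side becomes $\prod_{i=1}^d T^{p_i(m)}\big(\pi_1^{-1}\pi_1(x)\big) = \prod_{i=1}^d \pi_1^{-1}\pi_1(T^{p_i(m)}x)$, using that $\pi_1$ intertwines the actions and hence $T^{p_i(m)}\pi_1^{-1}(\pi_1(x)) = \pi_1^{-1}(\pi_1(T^{p_i(m)}x))$. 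This is precisely the desired inclusion, so I would simply take $\Omega_1 = \Omega_0$.

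I expect the main (and only real) obstacle to be the careful bookkeeping in the reindexing step: verifying that $q_i(n)=p_i(n+m)-p_i(m)$ genuinely stays within the class "distinct, non-constant, vanishing at $0$" — in particular that distinctness is preserved, which follows because $q_i - q_j = (p_i-p_j)(\cdot+m) - (p_i-p_j)(m)$ is non-constant iff $p_i-p_j$ is — and that the product set transforms correctly under the product homeomorphism. Everything else is a routine application of Baire category and the equivariance of $\pi_1$. One could alternatively phrase $\Omega_1$ as the intersection over all tuples $C'$ \emph{and} all $m$ simultaneously, but the translation trick above shows the $m$-uniformity is automatic once the polynomial-uniformity is in place, so the countable intersection over tuples alone suffices.
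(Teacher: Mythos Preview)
Your proposal is correct and follows essentially the same route as the paper: intersect the dense $G_\delta$ sets from Theorem~HSY over the countable collection of admissible $d$-tuples of polynomials, then handle the shift $m$ by passing to the reindexed polynomials $q_i(n)=p_i(n+m)-p_i(m)$ and translating back via the product homeomorphism $T^{p_1(m)}\times\cdots\times T^{p_d(m)}$. The paper's proof is terser (it phrases the last step via neighborhoods rather than applying the homeomorphism to the full inclusion), but the idea is identical.
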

\begin{proof} Since the set of integral polynomials is countable, we may assume that the set $\Omega$ in Theorem HSY is satisfied for any
distinct non-constant integral polynomials vanishing at $0$. Set $\Omega_1$ to be the set. For $x\in \Omega_1$ and $m\in\Z$, let
$x_i\in \pi_1^{-1}\pi_1(T^{p_i(m)}x)$, and $V_i$ be a neighborhood of $x_i$, $1\le i\le d$. It is clear that $T^{-p_i(m)}V_i\cap \pi_1^{-1}\pi_1(x)
\not=\emptyset$, $1\le i\le d$. Applying Theorem HSY to polynomials $\{p_1(n+m)-p_1(m), \ldots, p_d(n+m)-p_d(m)\}$ we get the result.
\end{proof}

Now we are ready to show Theorem A.

\begin{proof}[Proof of Theorem A]
Let $C=\{p_1,\ldots,p_d\}$ be distinct non-constant integral polynomials with $p_i(0)=0, 1\le i\le d$.
Let $\pi_1:X\to X_{\infty}$ and $\pi_2: X_\infty\ra X_k$ be the factor maps, where $k$ is the number defined in Theorem \ref{distal}.

Let $\pi_k=\pi_2\pi_1:X\ra X_k$ and $\Omega_1$ be the set defined in Lemma \ref{dir-hsy}.

By Theorem \ref{distal}, there is a dense $G_\delta$ set $\Omega_2$ of $X_\infty$ such that for any 
$y\in \Omega_2$,
\begin{equation}\label{eq-4-2}
(\pi_2^{-1}\pi_2(y))^d\subset \overline{\{(T^{p_1(n)}y, \ldots, T^{p_d(n)}y): n\in\Z\}}.
\end{equation}
Let $\Omega_3$ be the dense $G_\delta$ subset of $X_\infty$ such that if $y\in \Omega_3$, then $y$ is a continuous point of $y\mapsto \pi_1^{-1}(y)$
(see Lemma \ref{easy}).

Let $\Omega'=\Omega_1\cap \pi_1^{-1}(\Omega_2\cap \Omega_3)$. Then $\Omega'$ is a dense $G_\delta$ subset of $X$. Then there is a dense $G_\delta$ set
$\Omega_4$ of $X_k$ such that for each $y\in \Omega_4$, $\pi_k^{-1}(y)\cap \Omega'$ is a dense $G_\delta$ set of $\pi_k^{-1}(y)$ (see Lemma \ref{veech-1}). Recall that $\pi_k=\pi_2\pi_1:X\ra X_k$.

\medskip
Set $\Omega=\pi_k^{-1}\Omega_4\cap \Omega'$. We claim $\Omega$ is the set we need. To show the claim let $x\in \Omega$ and
$$x_1,\ldots,x_{d}\in (\pi_k^{-1}\pi_k(x))\cap \Omega'=(\pi_1^{-1}\pi_2^{-1}\pi_2\pi_1(x))\cap \Omega'.$$

Then
$\pi_1(x_1),\ldots,\pi_1(x_{d})\in \pi_2^{-1}\pi_2\pi_1(x).$
It is clear that $\pi_2(\pi_1(x_i))=\pi_2\pi_1(x)$, $1\le i\le d$  and $\pi_1(x)\in \Omega_2\subset X_\infty$. Thus by (\ref{eq-4-2}) there is a sequence $\{n_i\}$ of $\Z$ such that
$$(T^{p_1(n_i)}\pi_1(x),\ldots, T^{p_d(n_i)} \pi_1(x))\ra (\pi_1(x_1),\ldots,\pi_1(x_{d})),\ as \ i\to\infty.$$
That is,
$$(\pi_1(T^{p_1(n_i)}x),\ldots, \pi_1  (T^{p_d(n_i)}x))\ra (\pi_1(x_1),\ldots,\pi_1(x_{d})),\ as \ i\to\infty.$$


Since $x_i\in \Omega'$ we have that $\pi_1(x_i)\in \Omega_3$,  which implies that $\pi_1(x_i)$ is a continuous point for
$y\mapsto \pi_1^{-1}(y)$, $1\le i\le d$. By Lemma \ref{dir-hsy} we know that for any $i\in \N$
$$\prod_{j=1}^d\pi_1^{-1}\pi_1(T^{p_j(n_i)}x)\subset \overline{\{(T^{p_1(n)}x, \ldots, T^{p_d(n)}x): n\in\Z\}}.$$ So, we get that
$(x_1,\ldots,x_{d})\in \overline{\{(T^{p_1(n)}x, \ldots, T^{p_d(n)}x): n\in\Z\}}.$
Since such tuples $(x_1,\ldots, x_d)$ are dense in $\pi_k^{-1}\pi_k(x)$, we conclude that
$$(\pi_k^{-1}\pi_k(x))^d\subset \overline{\{(T^{p_1(n)}x, \ldots, T^{p_d(n)}x): n\in\Z\}}.$$
This ends the proof.
\end{proof}

\subsection{Remarks} It is clear that if $C$ is a set of linear polynomials, the number $k$ can be chosen to be $d-1$ by
\cite{HK05} or \cite{Zie}.

\medskip
Let $(X,\X,\mu, T)$ be a m.p.s. Let
$${\mathcal H}_{\rm rat}=\overline{\{f\in L^2(X,\mu): \exists a \in \N \ s.t.\ T^a f=f\}},$$
and let $\X_{\rm rat}\subseteq \X$ such that
${\mathcal H}_{\rm rat}=L^2(X,\X_{\rm rat},\mu).$

\begin{thm}\cite[Lemma 3.14]{F}\label{Furstenberg}
Let $(X,\X,\mu, T)$ be a m.p.s. and let $q(n)$ be a non-constant integral polynomial. Then for all $f\in L^2(X, \mu)$,
\begin{equation*}
\frac{1}{N}\sum_{n=0}^{N-1}T^{q(n)}f- \frac{1}{N}\sum_{n=0}^{N-1}T^{q(n)} \E(f|\X_{\rm rat})\longrightarrow 0, \ N\to\infty ,
\end{equation*}
in $L^2(X,\mu)$.
In particular, if $(X,\X,\mu,T)$ is totally ergodic \footnote{A m.p.s. $(X,\X,\mu,T)$ is totally ergodic if $(X,\X,\mu,T^k)$ is ergodic for all $k\in \Z\setminus\{0\}$.}, then
\begin{equation*}
\frac{1}{N}\sum_{n=0}^{N-1}T^{q(n)}f \longrightarrow \int_X f d\mu, \ N\to\infty ,
\end{equation*}
in $L^2(X,\mu)$.
\end{thm}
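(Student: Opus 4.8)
The plan is to first reduce to the case $\E(f|\X_{\rm rat})=0$ and then to induct on $\deg q$ using the van der Corput trick in a Hilbert space. By the choice of $\X_{\rm rat}$ we have ${\mathcal H}_{\rm rat}=L^2(X,\X_{\rm rat},\mu)$, so $\E(\cdot\,|\,\X_{\rm rat})$ is the orthogonal projection of $L^2(X,\mu)$ onto ${\mathcal H}_{\rm rat}$; hence $g:=f-\E(f|\X_{\rm rat})$ satisfies $g\perp{\mathcal H}_{\rm rat}$. Since $\frac1N\sum_{n=0}^{N-1}T^{q(n)}f-\frac1N\sum_{n=0}^{N-1}T^{q(n)}\E(f|\X_{\rm rat})=\frac1N\sum_{n=0}^{N-1}T^{q(n)}g$, the first assertion amounts to showing $\frac1N\sum_{n=0}^{N-1}T^{q(n)}g\to0$ in $L^2(X,\mu)$, and it therefore suffices to prove this for every $g$ orthogonal to ${\mathcal H}_{\rm rat}$. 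For the ``in particular'' part, total ergodicity makes every $T^a$ with $a\ne0$ ergodic, so each $\{w\in L^2:T^aw=w\}$ consists of constants, $\X_{\rm rat}$ is the trivial $\sigma$-algebra, and $\E(f|\X_{\rm rat})=\int_X f\,d\mu$; as constants are $T$-invariant, $\frac1N\sum_{n=0}^{N-1}T^{q(n)}\E(f|\X_{\rm rat})=\int_X f\,d\mu$, and combining with the first assertion gives the second.

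To prove that $\frac1N\sum_{n=0}^{N-1}T^{q(n)}g\to0$ for $g\perp{\mathcal H}_{\rm rat}$ I argue by induction on $b:=\deg q\ge1$. For $b=1$, write $q(n)=an+c$ with $a\ne0$; then $\frac1N\sum_{n=0}^{N-1}T^{q(n)}g=T^c\bigl(\frac1N\sum_{n=0}^{N-1}(T^a)^ng\bigr)$, which by von Neumann's mean ergodic theorem for the unitary $T^a$ converges to $T^cP_ag$, where $P_a$ is the orthogonal projection onto $\{w\in L^2:T^aw=w\}\subseteq{\mathcal H}_{\rm rat}$; since $g\perp{\mathcal H}_{\rm rat}$ we get $P_ag=0$. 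For $b\ge2$, put $u_n:=T^{q(n)}g$, a bounded sequence in $L^2(X,\mu)$. By the van der Corput inequality for bounded sequences in a Hilbert space, $\frac1N\sum_{n=0}^{N-1}u_n\to0$ follows once we check that $\lim_{N\to\infty}\frac1N\sum_{n=0}^{N-1}\langle u_{n+h},u_n\rangle=0$ for each fixed $h\ge1$. Now $\langle u_{n+h},u_n\rangle=\langle T^{q(n+h)-q(n)}g,g\rangle$, and $q_h(n):=q(n+h)-q(n)$ is a non-constant integral polynomial of degree $b-1$ (its leading coefficient equals $b\,h$ times that of $q$), so the induction hypothesis gives $\frac1N\sum_{n=0}^{N-1}T^{q_h(n)}g\to0$ in $L^2$, whence $\frac1N\sum_{n=0}^{N-1}\langle u_{n+h},u_n\rangle=\bigl\langle\frac1N\sum_{n=0}^{N-1}T^{q_h(n)}g,\,g\bigr\rangle\to0$. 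This closes the induction and proves the theorem.

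The only non-routine ingredients are the Hilbert-space van der Corput inequality and the elementary fact that passing from $q$ to $q_h$ drops the degree by exactly one; both are standard, so I do not anticipate a genuine obstacle. As an alternative, one could avoid the induction via spectral theory: if $\sigma_g$ denotes the spectral measure of $g$ on $\T$, then $\bigl\|\frac1N\sum_{n=0}^{N-1}T^{q(n)}g\bigr\|^2=\int_{\T}\bigl|\frac1N\sum_{n=0}^{N-1}e^{2\pi i q(n)t}\bigr|^2\,d\sigma_g(t)$, and Weyl's equidistribution theorem for polynomial sequences forces the integrand to $0$ at every irrational $t$, while $g\perp{\mathcal H}_{\rm rat}$ means $\sigma_g$ has no atom at any rational $t$; dominated convergence then finishes. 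I would present the van der Corput argument as the main line since it is self-contained relative to the material already developed.
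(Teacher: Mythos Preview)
Your argument is correct. The reduction to $g\perp\mathcal{H}_{\rm rat}$, the base case via von Neumann's ergodic theorem for $T^a$, and the inductive step via the Hilbert-space van der Corput lemma all go through as you describe; the derivative polynomial $q_h(n)=q(n+h)-q(n)$ is indeed integral, non-constant, and of degree exactly $b-1$ for $h\ge1$ and $b\ge2$, so the induction closes. The spectral alternative you outline is also valid: the key point, that $g\perp\mathcal{H}_{\rm rat}$ forces $\sigma_g$ to have no atom at any rational point of $\T$, follows since an eigenfunction with eigenvalue $e^{2\pi i p/q}$ is $T^q$-invariant and hence lies in $\mathcal{H}_{\rm rat}$.

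As for comparison with the paper: the paper does not supply its own proof of this theorem. It is stated there as a citation of \cite[Lemma~3.14]{F} and used as a black box in Section~3.4 to handle the case of a single polynomial (replacing Theorem~\ref{characteristic} by this result). Furstenberg's original argument in \cite{F} is closer to your spectral alternative than to your main van der Corput line, but both are standard and either would be acceptable. There is nothing to correct.
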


For a minimal system $(X,T)$ we may define $X_{rat}$, a topological version of $Z_{rat}$ (where $(Z_{rat}, \X_{\rm rat},\mu_{rat},T)$ is the factor), which is called {\it rational topological Kronecker factor} in \cite{GKR}. It is an adding machine, and maximal in the sense that if $Y$ is an adding machine and is a factor of $X$ then $Y$ is a factor of $X_{rat}$. $X_{rat}$ reflects the number of minimal subsystems of $(X,T^n)$, $n\in\Z\setminus\{0\}$. Note that for an $\infty$-step pronilsystem,  $X_{rat}=Z_{rat}$.

By using Lemma \ref{dir-hsy}, the arguments in Subsection \ref{distalcase} (replacing Theorem \ref{characteristic} by Theorem \ref{Furstenberg}) we get that if $(X,T)$ is minimal and $p$ is a non-constant integral polynomial vanishing at $0$, then there is a dense $G_\delta$ set $\Omega$ of $X$ such that for each $x\in \Omega$, $\pi_{rat}^{-1}\pi_{rat}(x)\subset \overline{\{T^{p(n)}x: n\in\Z\}}$, where $\pi_{rat}: X\ra X_{rat}$ is the factor map.

Particularly, when $(X,T)$ is totally minimal \footnote{$(X,T)$ is totally minimal if $(X,T^n)$ is minimal for any $n\in\Z\setminus\{0\}$.}
$X_{rat}$ is trivial. Thus, the above conclusion gives another approach to the result in \cite{5p, Qiu} for a single polynomial.

\section{Applications}
In this section we will provide applications of Theorem A, i.e. prove Theorem B. Namely, we will use it to study the regionally proximal relation along polynomials. We begin with the general case and then obtain a corollary when the polynomials are linear.
\subsection{General cases} Let us recall a definition.
Let $(X,T)$ be a t.d.s., $d\in\N$ and $C=\{p_1,\ldots,p_d\}$ be distinct non-constant integral polynomials with $p_i(0)=0$, $1\le i\le d$. We say $(x,y)$ is regionally proximal along $C$ if for each neighborhood $U\times V$ of $(x,y)$ and $\ep>0$ there are $(x',y')\in U\times V$ and $n\in\Z$
such that
$$\rho(T^{p_i(n)}x',T^{p_i(n)}y')<\ep,\ 1\le i\le d.$$
The set of all such pairs is denoted by $\RP^{[d]}_{C}(X,T)$. 

\medskip
Then it's clear that:
\begin{prop}\label{RPC} Let $(X,T)$ be a t.d.s., and $\pi:X\ra Y$ be a factor map. Then we have
\begin{enumerate}
\item $\RP^{[d]}_{C}$ is a closed $T\times T$-invariant relation.

\item $(\pi\times \pi)(\RP^{[d]}_{C}(X))\subset \RP^{[d]}_{C}(Y).$

\end{enumerate}
\end{prop}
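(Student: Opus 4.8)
The plan is to verify both clauses of Proposition~\ref{RPC} by unwinding the definition of $\RP^{[d]}_C$ directly and transporting the approximate-return estimates across the relevant maps, using only that $T$, $T^{-1}$ and $\pi$ are uniformly continuous (the underlying spaces being compact metric) and equivariant. Nothing beyond this bookkeeping is involved, which is why the assertion is stated as clear; below I indicate the relevant chases.

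For closedness in (1) I would exhibit $\RP^{[d]}_C$ as an intersection of closed sets. For each $k\in\N$ put
\[
N_k=\bigcup_{n\in\Z}\ \bigcap_{i=1}^{d}\bigl\{(x',y')\in X\times X:\rho(T^{p_i(n)}x',T^{p_i(n)}y')<1/k\bigr\},
\]
which is open, being a union of finite intersections of preimages of open sets under continuous maps. Unwinding the definition, $(x,y)\in\RP^{[d]}_C$ if and only if every product neighbourhood $U\times V$ of $(x,y)$ meets $N_k$ for every $k$, that is, $(x,y)\in\overline{N_k}$ for every $k$; hence $\RP^{[d]}_C=\bigcap_{k\in\N}\overline{N_k}$ is closed. (Equivalently one can argue sequentially: if $(x_m,y_m)\to(x,y)$ with $(x_m,y_m)\in\RP^{[d]}_C$, then for any product neighbourhood $U\times V$ of $(x,y)$ and any $\ep>0$ one has $(x_m,y_m)\in U\times V$ for large $m$, and since $U\times V$ is then a neighbourhood of $(x_m,y_m)$, the defining property at $(x_m,y_m)$ yields the pair $(x',y')\in U\times V$ and $n\in\Z$ needed to witness $(x,y)\in\RP^{[d]}_C$.)

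For $T\times T$-invariance the one thing to record is that powers of $T$ commute, so $T^{p_i(n)}(Tx')=T\bigl(T^{p_i(n)}x'\bigr)$ for every $x'$ and $n$; in particular translating a witnessing pair by $T$ does not change the system of polynomials. Concretely, given $(x,y)\in\RP^{[d]}_C$, a neighbourhood $U\times V$ of $(Tx,Ty)$ and $\ep>0$, choose $\ep'>0$ with $\rho(a,b)<\ep'\Rightarrow\rho(Ta,Tb)<\ep$ by uniform continuity of $T$; apply the definition at $(x,y)$ to $T^{-1}U\times T^{-1}V$ and $\ep'$ to get $(x',y')$ and $n$; then $(Tx',Ty')\in U\times V$ and $\rho\bigl(T^{p_i(n)}(Tx'),T^{p_i(n)}(Ty')\bigr)=\rho\bigl(T(T^{p_i(n)}x'),T(T^{p_i(n)}y')\bigr)<\ep$ for $1\le i\le d$, so $(Tx,Ty)\in\RP^{[d]}_C$. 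Running the identical argument with $T^{-1}$ in place of $T$ (using that $T^{p_i(n)}$ commutes with $T^{-1}$ as well) gives the reverse inclusion, hence $(T\times T)\RP^{[d]}_C=\RP^{[d]}_C$.

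For (2), let $(x,y)\in\RP^{[d]}_C(X)$; given a neighbourhood $U\times V$ of $(\pi x,\pi y)$ in $Y\times Y$ and $\ep>0$, pick $\ep'>0$ with $\rho(a,b)<\ep'\Rightarrow\rho(\pi a,\pi b)<\ep$ by uniform continuity of $\pi$ (writing $\rho$ also for the metric on $Y$); apply the definition of $\RP^{[d]}_C(X)$ at $(x,y)$ to $\pi^{-1}(U)\times\pi^{-1}(V)$ and $\ep'$ to obtain $(x',y')\in\pi^{-1}(U)\times\pi^{-1}(V)$ and $n\in\Z$, and set $(u,v)=(\pi x',\pi y')\in U\times V$. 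By equivariance $\pi\circ T=T\circ\pi$ one gets $\rho\bigl(T^{p_i(n)}u,T^{p_i(n)}v\bigr)=\rho\bigl(\pi(T^{p_i(n)}x'),\pi(T^{p_i(n)}y')\bigr)<\ep$ for all $i$, witnessing $(\pi x,\pi y)\in\RP^{[d]}_C(Y)$. Thus the only step that is not a completely mechanical chase with uniform continuity is the invariance, and even there the sole observation needed is that shifting a witnessing orbit segment by one step leaves the polynomials $p_1,\dots,p_d$ untouched.
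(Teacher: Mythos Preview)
Your argument is correct and complete. The paper itself gives no proof of this proposition, stating only that it is ``clear''; your chase through the definitions using uniform continuity of $T$, $T^{-1}$ and $\pi$ together with equivariance is exactly the routine verification one would expect, and there are no gaps.
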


For a t.d.s. $(X,T)$ we use $h(T)$ to denote the topological entropy of $T$. To study the entropy property of the minimal systems with $\RP^{[d]}_{C}(X)=\Delta$ we need the properties of  {\it an entropy pair} which was studied in \cite{BL}. It is known that for a t.d.s. $(X,T)$, $h(T)>0$ if and only if there is an entropy pair
$(x_1,x_2)$ with $x_1\not=x_2$.
It was shown in \cite{HY06} that an entropy pair has {\it an independence set}
of positive density \footnote{$F\subset \N$ has positive density if $\lim_{N\ra \infty} \frac{1}{N}|F\cap \{1,\ldots, N\}|>0$.}. Note that for $F\subset \N$ we say $(x_1,x_2)$ has an independence set $F$  if for any finite $S \subset F$ and any neighborhood $U_1\times U_2$ of $(x_1,x_2)$ we have
$$T^{-s_1}U_{i_1}\cap \cdots  \cap T^{-s_l}U_{i_l}\not=\emptyset,$$
for any $(i_1,\ldots,i_l)\in \{1,2\}^l,$ if $S=\{s_1,\ldots,s_l\}$.

\medskip
Now we are ready to show Theorem B.
\begin{proof}[Proof of Theorem B]
Let $l$ be an integral polynomial with  $l(0)=0$ such that $l, p_1,\ldots,p_d,l+p_1,\ldots,l+p_d$ are distinct non-constant polynomials. For example,
any integral polynomial $l$ with  $l(0)=0$ and $deg(l)>\max\{deg(p_1), \ldots, deg(p_d)\}$ is the one we need.
Set $$q_0=l, q_1=p_1,q_2=p_2, \ldots, q_d=p_d, q_{d+1}=l+p_1, \ldots, q_{2d}=l+p_d, $$ and
$C_{2d+1}=\{q_0, q_1,\ldots, q_{2d}\}$.

Then by the Theorem A, there exists a $k\in\N$ depending only on $C_{2d+1}$ such that for a residual subset $\Omega$ of $X$, and $x\in \Omega$
we have $(\pi^{-1}_{k}(\pi_{k}(x)))^{2d+1}\subset L_x^{C_{2d+1}}$, where $\pi_{k}: X\to X_{k}$ is the factor map.

Now let $x_1\in \Omega$, and  $x_2\in \pi_k^{-1}(\pi_k(x_1))$. Let $\ep>0$ and $U_1,U_2$ be neighborhoods of $x_1,x_2$ respectively with $\diam(U_2)<\ep$.
Since $(\pi^{-1}_{k}(\pi_{k}(x_1)))^{2d+1}\subset L_{x_1}^{C_{2d+1}}$ we get that there is $n\in\Z$ such that
$$T^{q_0(n)}x_1, T^{q_1(n)}x_1,\ldots, T^{q_{2d}(n)}x_1\in U_2.$$ That is, we have
$$T^{l(n)}x_1, T^{p_1(n)}x_1, \ldots, T^{p_d(n)}x_1, T^{l(n)+p_1(n)}x_1, T^{l(n)+p_2(n)}x_1\ldots, T^{l(n)+p_d(n)}x_1\in U_2.$$

Set $x'=x_1\in U_1,y'=T^{l(n)}x_1\in U_2$. Then we have
$$\rho(T^{p_i(n)}x', T^{p_i(n)}y')<\ep, \ 1\le i\le d.$$

This implies that $(x_1,x_2)\in \RP^{[d]}_C$, where $C=\{p_1,\ldots,p_d\}$. As $\RP^{[d]}_C=\Delta$ we conclude that $x_2=x_1$, i.e.
$\pi_k$ is almost one-to-one.

\medskip
Particularly, if $(X,T)$ is distal then it is a $k$-step pro-nilsystem by what we just proved.

\medskip
Now we show $h(T)=0$. Assume the contrary that $h(T)>0$. So, there is an entropy pair $(x_1,x_2)$ in $X\times X$ with $x_1\not=x_2$. Fix $\ep>0$. For any neighborhood $U_1\times U_2$ of $(x_1,x_2)$ with $\diam(U_2)<\ep$, there is an independent set $F$ of positive density associated to $U_1\times U_2$.
For the polynomials $l, l+p_1,\ldots,l+p_d$, by Bergelson and Leibman's theorem (see \cite{BL96})
there are $m,n\in \Z$ with
$$m+l(n),m+l(n)+p_1(n), m+l(n)+p_2(n), \ldots, m+l(n)+p_d(n)\in F.$$ Thus, we have
$$T^{-m-l(n)}U_1\cap T^{-m-l(n)-p_1(n)}U_2\cap \cdots \cap T^{-m-l(n)-p_d(n)}U_2\not=\emptyset$$
and $$T^{-m-l(n)}U_2\cap T^{-m-l(n)-p_1(n)}U_2\cap \cdots \cap T^{-m-l(n)-p_d(n)}U_2\not=\emptyset.$$
This implies that
$$U_1\cap T^{-p_1(n)}U_2\cap \cdots \cap T^{-p_d(n)}U_2\not=\emptyset$$
and $$U_2\cap T^{-p_1(n)}U_2\cap \cdots \cap T^{-p_d(n)}U_2\not=\emptyset.$$
Thus there are $y_1\in U_1$ with $T^{p_i(n)}y_1\in U_2$, and $y_2\in U_2$ with $T^{p_i(n)}y_2\in U_2$, $1\le i\le d$.
This implies that $\rho(T^{p_i(n)}y_1, T^{p_i(n)}y_2)<\ep$, $1\le i\le d$. Since $\ep$ is arbitrary, we get that
$(x_1,x_2)\in \RP^{[d]}_{C}$, a contradiction. This proves that $h(T)=0$.
\end{proof}

\subsection{Linear case}
To refine Theorem B for the linear case, we need a lemma from \cite[Theorem 3.8]{4p}.
\begin{lem}\label{AP}
Let $\pi:(X,T)\lra (Y,T)$ be a proximal extension between two t.d.s.
Then $\AP^{[d]}(X)\supset R_\pi=\{(x,y)\in X^2:\pi(x)=\pi(y)\}$ for any $d\in\N$.
\end{lem}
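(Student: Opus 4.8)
The plan is to deduce the lemma from the stronger, extension-free assertion that every proximal pair is regionally proximal along the arithmetic progression $\{n,2n,\ldots,dn\}$, i.e. ${\bf P}(X,T)\subseteq \AP^{[d]}(X,T)$. By definition a proximal extension satisfies $R_\pi\subseteq {\bf P}(X,T)$, so once this inclusion is available the lemma is immediate. Thus I fix a proximal pair $(x,y)$, neighbourhoods $U\ni x$, $V\ni y$ and $\ep>0$, and must produce $(x',y')\in U\times V$ and $n\in\Z$ with $\rho(T^{in}x',T^{in}y')<\ep$ for $1\le i\le d$.

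The first step is to encode proximality in the enveloping semigroup $E=E(X,T)$. Since $(x,y)$ is proximal, there is a minimal left ideal $M\subseteq E$ and a minimal idempotent $u\in M$ with $ux=uy$; using that such an idempotent is a right identity on its minimal left ideal ($pu=p$ for all $p\in M$) one upgrades this to the cleaner fact that $px=py$ for \emph{every} $p\in M$. The second step passes to the action generated by $\tau_d=T\times T^2\times\cdots\times T^d$. Consider the compact right--topological semigroup $G=\overline{\{(T^n,T^{2n},\ldots,T^{dn}):n\in\Z\}}\subseteq E^d$ (it is a subsemigroup, since $T^{im}T^{in}=T^{i(m+n)}$) together with a minimal idempotent ${\bf w}=(w_1,\ldots,w_d)\in G$; as the first coordinate projection $\mathrm{pr}_1:G\ra E$ is a surjective homomorphism, we may choose ${\bf w}$ lying over $u$, so $w_1=u$. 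Writing ${\bf w}=\lim_\alpha(T^{n_\alpha},\ldots,T^{dn_\alpha})$, any pair $(a,b)$ with $w_ia=w_ib$ for all $i$ satisfies $\rho(T^{in_\alpha}a,T^{in_\alpha}b)\ra 0$ \emph{simultaneously} for $1\le i\le d$ (because $T^{in_\alpha}a\ra w_ia$ and $T^{in_\alpha}b\ra w_ib=w_ia$); hence it would suffice to find $(x',y')\in U\times V$ with $w_ix'=w_iy'$ for all $i$, i.e. to show $(x,y)$ lies in the closure of $\mathrm{Prox}_{\bf w}:=\{(a,b):w_ia=w_ib,\ 1\le i\le d\}$.

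The main obstacle is precisely this last point. Proximality of $(x,y)$ under $T$ forces only $w_1x=w_1y$ (the coordinate governed by $u$) and gives no control over $w_ix$ versus $w_iy$ for $i\ge 2$, since proximality under $T$ does not entail proximality under $T^{i}$. Consequently one cannot take $x'=x,\ y'=y$ in general, and genuine perturbation is unavoidable. I expect to overcome this by combining the idempotent structure of $G$ with a van der Waerden / Bergelson--Leibman input (Theorem \ref{polbl}): instead of synchronising $x,y$ at the homogeneous times $\{n,2n,\ldots,dn\}$ --- which is hopeless, as $\{n,2n\}$ is not partition regular --- one locates the synchronisation along a \emph{shifted} progression $\{m+n,\ldots,m+dn\}$ inside the central set $\{k:\rho(T^kx,T^ky)<\ep\}$ (central because the return set of $(x,y)$ to the diagonal neighbourhood lies in the minimal idempotent ultrafilter carrying $u$), while simultaneously arranging the base time $m$ so that $T^mx\in U$ and $T^my\in V$, and then setting $x'=T^mx,\ y'=T^my$. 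Making the choice of $m$ compatible with the multiple--recurrence producing the progression --- that is, coordinating the recurrence of the non--almost--periodic pair $(x,y)$ with the diagonal multiple recurrence, using the minimality of $X$ --- is the technical heart of the argument.
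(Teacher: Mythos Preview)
The paper does not actually prove this lemma: it is quoted verbatim from \cite[Theorem 3.8]{4p} and used as a black box. So there is no in-paper argument to compare with; your reduction to the inclusion ${\bf P}(X,T)\subseteq\AP^{[d]}(X,T)$ is exactly the content of the cited result and is the right way in.

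That said, what you have written is a plan, not a proof, and the step you yourself flag as ``the technical heart'' is a genuine gap. Your scheme is to take $(x',y')=(T^mx,T^my)$ and to locate a shifted progression $\{m+n,\ldots,m+dn\}$ inside the central set $P^\ep_{x,y}=\{k:\rho(T^kx,T^ky)<\ep\}$ while forcing $m\in N((x,y),U\times V)$. But proximal pairs need not be recurrent: even when $X$ is minimal, $(x,y)$ is typically \emph{not} a minimal (hence not a uniformly recurrent) point of $(X\times X,T\times T)$, so $N((x,y),U\times V)$ can reduce to $\{0\}$. With $m=0$ you would need $\{n,2n,\ldots,dn\}\subseteq P^\ep_{x,y}$, and you have already (correctly) observed that this pattern is not partition regular; centrality/thickness of $P^\ep_{x,y}$ does not supply it (e.g.\ the odd integers are central but contain no $\{n,2n\}$). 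So the specific perturbation $(x',y')=(T^mx,T^my)$ cannot be pushed through on the hypotheses you have, and your appeal to ``the minimality of $X$'' both exceeds the hypotheses of the lemma as stated and, as just noted, does not by itself furnish the recurrence of the pair that your argument needs. A smaller point: from the surjection $\mathrm{pr}_1:G\to E$ you can only arrange $w_1$ to be \emph{some} minimal idempotent in the minimal left ideal $M$ containing $u$, not $w_1=u$ on the nose; this is harmless for you since $px=py$ for every $p\in M$, but it is worth stating correctly.

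In short: the reduction and the diagnosis of the obstacle are right, but the proposed resolution is incomplete and, as formulated, obstructed. Since the paper simply cites \cite{4p}, the honest fix is either to cite that result as the paper does, or to supply the (nontrivial) argument from \cite{4p} for ${\bf P}\subseteq\AP^{[d]}$ rather than the heuristic sketch.
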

Now let $C$ be the polynomials $\{n,2n,\cdots,dn\}$. We have a corollary of Theorem B.
\begin{cor}\label{cor-impot}Let $(X,T)$ be a minimal t.d.s. with $\AP^{[d]}=\Delta$
for some $d\in \N$, then it is a $d$-step pro-nilsystem.
\end{cor}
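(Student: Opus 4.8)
Corollary~\ref{cor-impot} is extracted from Theorem~B by specializing $C=\{n,2n,\ldots,dn\}$ and then upgrading the "almost one-to-one extension of $X_k$" conclusion to "equal to $X_d$". First I would apply Theorem~B: since $\AP^{[d]}(X,T)=\RP^{[d]}_C(X,T)=\Delta$, we obtain $k\in\N$ (depending only on the auxiliary finite family of polynomials $l,n,2n,\ldots,dn,l+n,\ldots,l+dn$ built in the proof of Theorem~B) such that $\pi_k\colon X\to X_k$ is an almost one-to-one extension and $h(T)=0$. In particular $(X,T)$ is a proximal extension of the $k$-step pro-nilsystem $X_k$, hence of its $d$-step pro-nilfactor, but that line of reasoning only bounds things the wrong way; the real point is to show $\RP^{[d]}(X,T)=\Delta$ and invoke Theorem~\ref{thm-1}(2).

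\textbf{Key steps.} The chain I would run is: by Lemma~\ref{AP}, an almost one-to-one (hence proximal) extension $\pi_k\colon X\to X_k$ forces $R_{\pi_k}\subseteq \AP^{[d]}(X,T)$. But $\AP^{[d]}(X,T)=\Delta$ by hypothesis, so $R_{\pi_k}=\Delta$, i.e.\ $\pi_k$ is injective and $X\cong X_k$. Thus $(X,T)$ is itself a $k$-step pro-nilsystem, and in particular distal (a pro-nilsystem is an inverse limit of nilsystems, which are distal). Now that $(X,T)$ is distal, $\RP^{[d]}(X,T)$ is an equivalence relation whose quotient is the maximal $d$-step pro-nilfactor $X_d$; I claim $\RP^{[d]}(X,T)=\AP^{[d]}(X,T)$ for distal (indeed for all minimal) systems — this is a standard fact, since for minimal systems $\RP^{[d]}$ coincides with the "dynamical cube" relation and the regionally proximal relation along arithmetic progressions of length $d$, as established in \cite{SY} (it is precisely why the notation $\AP^{[d]}$ was chosen in \cite{4p}). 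Granting this, $\RP^{[d]}(X,T)=\Delta$, so by Theorem~\ref{thm-1}(2) the system $(X,T)$ is a $d$-step pro-nilsystem. Finally one observes $d\le k$ need not hold a priori, but the conclusion "$d$-step pro-nilsystem" is exactly what is wanted regardless of the size of the auxiliary $k$: once $\RP^{[d]}=\Delta$ we are done, and $k$ plays no further role.

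\textbf{Main obstacle.} The delicate point is the identity $\AP^{[d]}(X,T)=\RP^{[d]}(X,T)$ — or at least the inclusion $\RP^{[d]}(X,T)\subseteq\AP^{[d]}(X,T)$, which is all that is needed here since $\RP^{[d]}_C=\AP^{[d]}$ and $\RP^{[d]}_C=\Delta$ already handle the other direction. For minimal systems this comparison is known (\cite{SY, 4p}): a regionally proximal pair of order $d$ can be witnessed along the progression $n,2n,\ldots,dn$ by a diagonal substitution in the cube-structure definition, so $\RP^{[d]}\subseteq\AP^{[d]}$. I would cite this rather than reprove it. Everything else is bookkeeping: Theorem~B gives the almost one-to-one $X_k$, Lemma~\ref{AP} collapses it because $\AP^{[d]}=\Delta$, distality of pro-nilsystems is noted in Section~2, and Theorem~\ref{thm-1}(2) closes the argument. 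So the proof is short; the only substantive input beyond Theorem~B and Lemma~\ref{AP} is the minimal-system comparison $\RP^{[d]}\subseteq\AP^{[d]}$.
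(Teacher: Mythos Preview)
Your proof is correct and follows the paper's argument step for step: Theorem~B gives that $X$ is an almost one-to-one extension of $X_k$; Lemma~\ref{AP} then forces $R_{\pi_k}\subseteq\AP^{[d]}=\Delta$, so $X\cong X_k$ is a $k$-step pro-nilsystem; finally \cite[Theorem 5.9]{4p} gives $\AP^{[d]}=\RP^{[d]}$ on such systems, so $\RP^{[d]}=\Delta$ and Theorem~\ref{thm-1}(2) finishes.

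Two minor corrections, though. First, your parenthetical claim that $\RP^{[d]}=\AP^{[d]}$ holds ``indeed for all minimal systems'' is not a standard fact---had it been, the corollary would be immediate and Conjecture~4 of \cite{5p} would never have been posed. What \cite[Theorem 5.9]{4p} supplies is exactly the distal (pro-nilsystem) case, which is all you use. Second, your ``diagonal substitution'' heuristic points the wrong way: setting $n_1=\cdots=n_d=n$ in the cube definition shows $\AP^{[d]}\subseteq\RP^{[d]}$, not the reverse inclusion $\RP^{[d]}\subseteq\AP^{[d]}$ that you actually need. The latter is genuinely nontrivial and is precisely the content of the cited result from \cite{4p}. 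Neither issue affects the validity of your argument, since you ultimately invoke the correct citation at the correct point.
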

\begin{proof} As $\AP^{[d]}=\Delta$,  Theorem B tells us that
$(X,T)$ is an almost one-to-one extension of some $X_{k}$ for some $k\in\N$.

By Lemma \ref{AP}, we conclude that $R_\pi=\Delta$ (where $\pi:X\ra X_k$ is the factor map) and thus $(X,T)$ is a $k$-step pro-nilsystem. Then we have $\AP^{[d]}=\RP^{[d]}=\Delta$ (see \cite[Theorem 5.9]{4p}) which implies that $(X,T)$ is a $d$-step pro-nilsystem by Theorem \ref{thm0}.
\end{proof}

We note that Corollary \ref{cor-impot} gives a negative answer to a conjecture in \cite[Conjecture 4]{5p}, which states that there is a minimal
t.d.s. $(X,T)$ such that $\AP^{[2]}=\Delta$, and $(X,T)$ is not distal. 

\subsection{Questions} We end the paper by stating some open questions.

Recently, Qiu and Yu \cite{QiuYu} obtained a measure-theoretical version of the saturation theorem along cubes.
We believe that Theorem A should have a measure-theoretical version similar to \cite[Theorem A]{QiuYu} for polynomials (then we could conclude that
$\RP^{[d]}_{C}(X,T)=\Delta$ implies that $(X,T)$ is uniquely ergodic). Unfortunately, we do not
know how to prove it even the polynomials are linear at this moment.

\medskip
Another question is that: is there a minimal t.d.s. $(X,T)$ such that $\RP^{[1]}_{\{n^2\}}=\Delta$ and $(X,T)$ is not an equicontinuous system?
We think that such a system exists.

\end{document}